\newtheorem{theorem}{Theorem}[section]
\newtheorem{lemma}[theorem]{Lemma}
\newtheorem{corollary}[theorem]{Corollary}
\newtheorem{conjecture}[theorem]{Conjecture}
\newcommand{\NOS}{\mathbb{N}}
\newcommand{\OS}{\mathbb{S}}
\tikzstyle{sommet}=[circle,draw,thick,fill=white]
\tikzstyle{sommetn}=[circle,fill=black,scale=0.6]
\tikzstyle{sommetp}=[circle,draw,thick,fill=white,scale=0.6]
\tikzstyle{point}=[dashed]
\tikzstyle{gras}=[ultra thick]
\def\longbox#1{\parbox{0.85\textwidth}{#1}}
\title{Long induced paths in graphs\thanks{This work was partially
supported by ANR Project Stint (\textsc{anr-13-bs02-0007}), and LabEx
PERSYVAL-Lab (\textsc{anr-11-labx-0025}).}}
\author{Louis~Esperet\thanks{CNRS, Laboratoire G-SCOP, Universit\'e de
Grenoble-Alpes, France.}
\and
Laetitia~Lemoine\thanks{Laboratoire G-SCOP, Universit\'e de
Grenoble-Alpes, France.}
\and
Fr\'ed\'eric Maffray\thanks{CNRS, Laboratoire G-SCOP, Universit\'e de
Grenoble-Alpes, France.}}
\begin{document}
\maketitle

\begin{abstract}
We prove that every 3-connected planar graph on $n$ vertices contains
an induced path on $\Omega(\log n)$ vertices, which is best possible
and improves the best known lower bound by a multiplicative factor of
$\log \log n$.  We deduce that any planar graph (or more generally,
any graph embeddable on a fixed surface) with a path on $n$ vertices,
also contains an induced path on $\Omega(\sqrt{\log n})$ vertices.  We
conjecture that for any $k$, there is a positive constant $c(k)$ such
that any $k$-degenerate graph with a path on $n$ vertices also
contains an induced path on $\Omega((\log n)^{c(k)})$ vertices.  We
provide examples showing that this order of magnitude would be best
possible (already for chordal graphs), and prove the conjecture in the
case of interval graphs.
\end{abstract}

\section{Introduction}

A graph contains a long induced path (i.e., a long path as an induced
subgraph) only if it contains a long path.  However, this necessary
condition is not sufficient, as shown by complete graphs and complete
bipartite graphs.  On the other hand, it was proved by Atminas, Lozin
and Ragzon \cite{ALR} that if a graph $G$ contains a long path, but
does not contain a large complete graph or complete bipartite graph,
then $G$ contains a long induced path.  Their proof uses several
applications of Ramsey theory, and the resulting bound on the size
of a long induced path is thus quantitatively
weak.

The specific case of $k$-degenerate graphs (graphs such that any
subgraph contains a vertex of degree at most $k$) was considered by
Ne\v{s}et\v{r}il and Ossona de Mendez in \cite{NO}.  These graphs
clearly satisfy the assumption of the result of Atminas, Lozin and
Ragzon \cite{ALR}, so $k$-degenerate graphs with long paths also
contain long induced paths.  Ne\v{s}et\v{r}il and Ossona de Mendez
\cite[Lemma 6.4]{NO} gave the following more precise bound: if $G$ is
$k$-degenerate and contains a path of size $n$, then it contains an
induced path of size $\frac{\log \log n}{\log (k+1)}$.  This result
was then used to characterize the classes of graphs of bounded
tree-length precisely as the classes of degenerate graphs excluding
some induced path of fixed size.  Ne\v{s}et\v{r}il and Ossona de
Mendez also asked \cite[Problem 6.1]{NO} whether their doubly
logarithmic bound could be improved.

Arocha and Valencia \cite{AV} considered the case of $3$-connected
planar graphs and $2$-connected outerplanar graphs.  An
\emph{outerplanar graph} is a graph that can be drawn in the the plane
without crossing edges and with all vertices on the external face.  It
was proved in \cite{AV} that in any $2$-connected outerplanar graph
with $n$ vertices, there is an induced path with $\Omega(\sqrt{\log
n})$ vertices and, using this fact, that any $3$-connected planar
graph with $n$ vertices contains an induced path with
$\Omega(\sqrt[3]{\log n})$ vertices.  Note that in these results there
is no initial condition on the size of a long path (the bounds only
depend on the number of vertices in the graph).  Di Giacomo, Liotta
and Mchedlidze \cite{GLM} recently proved that any $n$-vertex
$3$-connected planar graph contains an induced outerplanar graph of
size $\sqrt[3]{n}$, and that any $n$-vertex $2$-connected outerplanar
graph contains an induced path of size $\frac{\log n}{2 \log \log n}$,
and combining these two bounds, that any $n$-vertex $3$-connected
planar graph contains an induced path of size $\frac{\log n}{12 \log
\log n}$.

We will prove that if a $k$-tree (defined in the next section)
contains a path of size $n$, then it contains an induced path of size
$\tfrac{\log n}{k \log k}$.  Using similar ideas, we will show that a
partial 2-tree with a path of size $n$ also contains an induced path
of size $\Omega(\log n)$.  Outerplanar graphs are partial 2-trees, and
2-connected outerplanar graphs are Hamiltonian, so in particular this
shows that any $n$-vertex $2$-connected outerplanar graph contains an
induced path of size $\Omega(\log n)$.  Using the results of Di
Giacomo, Liotta and Mchedlidze \cite{GLM}, this directly implies that
any $n$-vertex $3$-connected planar graph contains an induced path of
size $\Omega(\log n)$, improving their bound by a multiplicative
factor of $\log \log n$.  Our bounds are tight up to a constant
multiplicative factor.

We derive from our result on $3$-connected planar graphs that any
planar graph (and more generally, any graph embeddable on a fixed
surface) with a path on $n$ vertices contains an induced path of
length $\Omega(\sqrt{\log n})$.  We also construct examples of planar
graphs with paths on $n$ vertices in which all induced paths have size
$O(\tfrac{\log n}{\log \log n})$.  Our examples can be seen as special
cases of a more general family of graphs: chordal graphs with maximum
clique size $k$, containing a path on $n$ vertices, but in which every
induced path has size $O((\log n)^{\tfrac2{k-1}})$.  This shows that
the doubly logarithmic bound of Ne\v{s}et\v{r}il and Ossona de Mendez
\cite{NO} cannot be replaced by anything better than $(\log
n)^{c(k)}$, for some function $c$.  We believe that this is the
correct order of magnitude.

\begin{conjecture}\label{conj:1}
There is a function $c$ such that for any integer $k$, any
$k$-degenerate graph that contains a path of size $n$ also contains an
induced path of size $(\log n)^{c(k)}$.
\end{conjecture}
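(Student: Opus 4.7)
The plan is to attempt an induction on the degeneracy parameter $k$. The base case $k=1$ is immediate: connected $1$-degenerate graphs are trees, and any path of size $n$ in a tree is already induced, so $c(1)=1$ works. For the inductive step, assume the statement holds with exponent $c(k-1)$, and let $G$ be a $k$-degenerate graph containing a path $P = p_1p_2\cdots p_n$. Fix a degeneracy ordering $v_1 < v_2 < \cdots < v_N$ of $V(G)$ in which each vertex has at most $k$ \emph{back-neighbors} (neighbors appearing earlier in the order).

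First I would extract from $P$ a long subpath $Q$ along which the ordering behaves predictably. A naive Erd\H{o}s--Szekeres application only yields a monotone subpath of length $\sqrt n$, which is already too lossy to iterate $k$ times. A sharper step would partition $P$ according to the rank of each vertex in the degeneracy ordering and extract a subpath of length $n/(\log n)^{O(1)}$ on which the ordering is monotone and the back-neighbors of the vertices of $Q$ interact in a controlled way.

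Given such a controlled subpath, the next step would reduce to the $(k-1)$-degenerate case by mimicking the $k$-tree argument of the paper. Concretely, one could select a back-neighbor $u$ shared by many vertices of $Q$, partition $Q$ into maximal subpaths whose internal vertices avoid $N(u)$, and recurse on either a single subpath (where one back-slot has been effectively fixed by $u$, yielding a $(k{-}1)$-degenerate flavor locally) or on the sequence of subpath representatives. If each level of this recursion costs only a polylogarithmic factor in path length, the induction closes with $c(k) = c(k-1) + O(1)$, or more plausibly $c(k) = c(k-1) \cdot O(1)$, depending on the precise reduction.

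The main obstacle, and the reason the conjecture remains open, is that $k$-degenerate graphs lack the clique-based recursive structure exploited for $k$-trees and interval graphs: back-neighborhoods can be arbitrary sets and need not share any vertex across distant portions of $P$, so the partition step based on a common back-neighbor $u$ can fail entirely. A successful proof would almost certainly require a Ramsey-type dichotomy asserting that either many vertices of $P$ share a back-neighbor (permitting the partition and recursion) or the back-neighborhoods are spread out enough that a long induced subpath can be read off from $P$ directly. Establishing such a dichotomy while losing only a polylogarithmic factor in $n$, so that the cumulative loss over $k$ stages is tolerable, is where I expect the genuine difficulty to lie.
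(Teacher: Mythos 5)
This statement is Conjecture~\ref{conj:1}; the paper does not prove it, and it remains open. The paper only establishes it in the special case of interval graphs (via the three lemmas of the last section) and shows, through the chordal constructions of tree-width $2t$, that a bound of the form $(\log n)^{c(k)}$ would be essentially best possible. So there is no proof in the paper to compare yours against, and your proposal is not a proof either --- it is a research programme, as you yourself acknowledge in the final paragraph.

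Concretely, none of the three steps of your plan is actually carried out. (1) The extraction of a subpath of $P$ of length $n/(\log n)^{O(1)}$ on which the degeneracy ordering is monotone \emph{and} back-neighborhoods ``interact in a controlled way'' is asserted, not proved; the naive Erd\H{o}s--Szekeres bound you correctly reject is the only tool you actually name, and it is too lossy. (2) The reduction to the $(k-1)$-degenerate case hinges on finding a back-neighbor $u$ common to many vertices of $Q$, but as you note, back-neighborhoods in a $k$-degenerate graph are arbitrary $\le k$-sets with no reason to concentrate; the clique structure that makes the analogous step work for $k$-trees (Lemma~\ref{lem:ksimp}(iv) and claim~(\ref{kp1c})) and for interval graphs (the clique $K$ of intervals crossing $l(v_i)$ in Lemma~\ref{lem:int1}) has no analogue here. (3) The dichotomy ``either many vertices share a back-neighbor, or the back-neighborhoods are spread out and $P$ already contains a long induced subpath'' is exactly the missing lemma, and the second horn is far from automatic: spread-out back-neighborhoods can still create many chords on every long subpath of $P$ (the best known unconditional bound in this generality is the doubly logarithmic one of Ne\v{s}et\v{r}il and Ossona de Mendez). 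Your base case $k=1$ is fine, but the induction never gets off the ground. If you want a tractable intermediate target, the paper's concluding section suggests partial $k$-trees, where a tree decomposition restores some of the clique-separator structure your argument needs.
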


We prove this conjecture in the special case of interval graphs.  More
precisely, we show that any interval graph with maximum clique size
$k$ containing a path of size $n$ contains an induced path of size
$\Omega((\log n)^{\frac{1}{(k-1)^2}})$, where the hidden multiplicative
constant depends on $k$.


We finish this section by recalling some definitions and terminology.
In a graph $G$, we say that a vertex $x$ is \emph{complete} to a set
$S\subseteq V(G)\setminus x$ when $x$ is adjacent to every vertex in
$S$.  A \emph{block} of a graph $G$ is any maximal $2$-connected
induced subgraph of $G$, where a bridge (a cut-edge) is also a block.
It is well known that the intersection graph of the blocks and
cut-vertices of $G$ can be represented by a tree $T$, which we call
the \emph{block tree} of $G$.  The \emph{size} of a path is the number
of its vertices, and the \emph{length} of a path is the number of its
edges.  A vertex is \emph{simplicial} if its neighborhood is a clique.
A simplicial vertex of degree $k$ is called \emph{$k$-simplicial}.  A
graph is \emph{chordal} if it contains no induced cycle of length at
least four.

In this article, the base of the logarithm is always assumed to be 2.

\section{Induced paths in $k$-trees}

For any integer $k\ge 1$, the class of $k$-trees is defined
recursively as follows:
\begin{quote}
$\bullet$ Any clique on $k$ vertices is a $k$-tree.

$\bullet$ If $G$ has a $k$-simplicial vertex $v$, and $G\setminus v$
is a $k$-tree, then $G$ is a $k$-tree.
\end{quote}
Hence if $G$ is any $k$-tree on $p$ vertices, there is an ordering
$x_1, \ldots, x_p$ of its vertices such that $\{x_1, \ldots, x_k\}$
induces a clique and, for each $i=k+1, \ldots, p$, the vertex $x_i$ is
a $k$-simplicial vertex in the subgraph induced by $\{x_1, \ldots,
x_i\}$.  We call this a \emph{$k$-simplicial ordering}, and we call
$\{x_1, \ldots, x_k\}$ the \emph{basis} of this ordering.  We recall
some easy properties of $k$-trees.
\begin{lemma}\label{lem:ksimp}
Let $G$ be any $k$-tree.  Then $G$ is chordal.  Moreover, $G$
satisfies the following properties:
\begin{itemize}
\item[(i)]
If $G$ is not a $k$-clique, then every maximal clique in $G$ has size
$k+1$, and $G$ has exactly $|V(G)|-k$ maximal cliques.
\item[(ii)]
If $G$ is not a clique, then $G$ has two non-adjacent $k$-simplicial
vertices.
\item[(iii)]
Any $k$-clique can be taken as the basis of a $k$-simplicial ordering
of $G$.
\item[(iv)]
For any $k$-clique $K$ of $G$, every component of $G\setminus K$
contains exactly one vertex that is complete to $K$.
\end{itemize}
\end{lemma}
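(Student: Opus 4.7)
The plan is to prove all five claims by induction on $|V(G)|$, using the $k$-simplicial ordering from the definition of a $k$-tree; throughout, I write $v=x_p$ for the last vertex of such an ordering. Chordality is immediate: consider a cycle of length at least four in $G$ and let $x_i$ be its vertex of largest index in the ordering; both cycle-neighbors have smaller index and therefore lie in $N(x_i)\cap\{x_1,\ldots,x_{i-1}\}$, which is a clique, giving a chord. Part (i) follows by induction: removing $v=x_p$ deletes the unique maximal clique $\{v\}\cup N(v)$ (of size $k+1$) containing $v$ and leaves all other maximal cliques of $G$ intact (none is contained in $N(v)$, which has only $k$ vertices); the inductive hypothesis applied to $G\setminus v$ finishes the count.

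For (ii), I induct on $|V(G)|\ge k+2$. If $|V(G)|=k+2$, then $G\setminus v$ is a $(k+1)$-clique and the unique vertex of $G\setminus v$ not adjacent to $v$ is $k$-simplicial in $G$ and non-adjacent to $v$. Otherwise, apply the inductive hypothesis to $G\setminus v$ to obtain two non-adjacent $k$-simplicial vertices $u_1,u_2$ of $G\setminus v$; a short case analysis on adjacency with $v$ concludes. The only delicate subcase is when $v$ is adjacent to both $u_1$ and $u_2$, which would force $N_{G\setminus v}(u_1)=N_G(v)=N_{G\setminus v}(u_2)$ and hence make $u_1u_2$ an edge, contradicting their non-adjacency.

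For (iii), I use induction together with (ii). The cases $|V(G)|\le k+1$ are trivial. Otherwise, (ii) produces two non-adjacent $k$-simplicial vertices, at least one of which, call it $w$, lies outside the clique $K$. The key technical step is to verify that $G\setminus w$ is still a $k$-tree: from any $k$-simplicial ordering of $G$, one observes that since $w$ has exactly $k$ neighbors in $G$, all of them must come before $w$ in any ordering where $w$ appears at position $>k$, so $w$ can be moved to the last position, yielding a valid $k$-simplicial ordering of $G\setminus w$. The corner case where $w$ sits in the basis of the current ordering is handled by swapping $w$ with its unique non-basis neighbor to obtain a new valid basis. Applying the inductive hypothesis to $G\setminus w$ with basis $K$ and appending $w$ produces the desired ordering.

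For (iv), I again induct. The case $|V(G)|\le k+1$ is direct. Otherwise, take a $k$-simplicial vertex $w\notin K$ via (ii). By induction, every component of $(G\setminus w)\setminus K$ contains exactly one vertex complete to $K$. Reinserting $w$: if $N(w)=K$, then $w$ becomes a new singleton component that is itself complete to $K$; if $N(w)\ne K$, then $w$ is not complete to $K$, and since $N(w)\setminus K$ is a nonempty clique it lies inside a single component of $(G\setminus w)\setminus K$, to which $w$ attaches without affecting the unique complete-to-$K$ vertex. The main obstacle across the whole proof is the technical step in (iii) showing that $G\setminus w$ is a $k$-tree; the remaining manipulations are routine uses of the simplicial ordering.
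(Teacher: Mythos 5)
Your proof is correct, and for chordality, (i), (ii) and (iii) it follows essentially the same route as the paper (which in fact omits the proofs of chordality, (i) and (ii) entirely, and for (iii) uses exactly your strategy: find via (ii) a $k$-simplicial vertex outside $K$, delete it, recurse). The one place you genuinely diverge is (iv): the paper proves existence from (iii) as you do, but proves \emph{uniqueness} directly, by taking a shortest path in a component $A$ between two vertices complete to $K$ and exhibiting an induced cycle of length at least four through a non-neighbor $z\in K$, contradicting chordality (via (i) to get the non-neighbor). Your inductive ``reinsert the simplicial vertex'' argument is a valid alternative; it is self-contained within the overall induction and never invokes chordality, at the price of once more relying on the fact that deleting a $k$-simplicial vertex leaves a $k$-tree. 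On that point you are actually \emph{more} careful than the paper, which silently assumes that $G\setminus x$ is a $k$-tree when applying the induction hypothesis in (iii). Your ``swap'' in the corner case does work, but the reason deserves a line: the back-neighborhood of the unique non-basis neighbor $u$ of $w$ is a $k$-clique containing $w$, hence consists of neighbors of $w$, hence equals the basis $B$; consequently no vertex strictly between the basis and $u$ in the ordering is adjacent to $u$, so exchanging the positions of $w$ and $u$ disturbs no other vertex's back-neighborhood. Finally, a small slip in (ii): in the case where $v$ is adjacent to both $u_1$ and $u_2$, the identity $N_{G\setminus v}(u_1)=N_G(v)$ you assert is neither forced nor needed (indeed $u_1\in N_G(v)$ rules it out); the contradiction follows immediately from the fact that $u_1$ and $u_2$ both lie in the clique $N_G(v)$ and so would be adjacent.
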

\begin{proof}
Properties (i) and (ii) follow easily from the existence of a
$k$-simplicial ordering, and we omit the details.

We prove (iii) by induction on $p=|V(G)|$.  Consider any $k$-clique
$K$ of $G$.  If $G=K$, there is nothing to prove.  So assume that $G$
is not a $k$-clique.  By (ii), $G$ has two non-adjacent $k$-simplicial
vertices $x$ and $y$.  We may assume that $x\notin K$.  By the
induction hypothesis, $G\setminus x$ admits a $k$-simplicial ordering
$x_1, \ldots, x_{p-1}$ such that $K$ is the basis of this ordering.
Then $x_1, \ldots, x_{p-1},x$ is a $k$-simplicial ordering for $G$,
with $K$ as a basis.

To prove (iv), consider any component $A$ of $G\setminus K$.  By
(iii), there is a $k$-simplicial ordering with $K$ as a basis.  The
first vertex of $A$ in the ordering has no neighbor in $V(G)\setminus
(K\cup A)$, so it must be complete to $K$.  Now suppose that $A$
contains two vertices $x,y$ that are complete to $K$.  Let
$x_0$-$\cdots$-$x_q$ be a shortest path in $A$ with $x=x_0$ and
$y=x_q$.  The vertex $x_1$ has a non-neighbor $z\in K$, for otherwise
$K\cup\{x_0, x_1\}$ is a clique of size $k+2$, contradicting (i).  Let
$j\ge 2$ be the smallest integer such that $x_j$ is adjacent to $z$;
so $2\le j\le q$.  Then $\{x_0, x_1, \ldots, x_j, z\}$ induces a cycle
of length at least four in $G$, contradicting the fact that $G$ is
chordal.
\end{proof}

\begin{theorem}\label{ktree}
Let $k$ be a fixed integer, $k\ge 2$, and let $G$ be a $k$-tree that
contains an $n$-vertex path.  Then $G$ contains an induced path of
size $\frac{\log (n-k-1)}{k \log k}=\frac{\log n}{k \log
k}-O(\frac{1}{n})$.
\end{theorem}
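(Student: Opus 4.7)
I would prove this by strong induction on $n$. The base case covers those $n$ for which the target bound $\log(n-k-1)/(k\log k)$ is at most $1$, in which case a single vertex serves as a trivial induced path.

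For the inductive step, given a $k$-tree $G$ with an $n$-vertex path $P$, I would pick a $k$-simplicial vertex $v$ using Lemma~\ref{lem:ksimp}(ii), and set $K = N(v)$. By Lemma~\ref{lem:ksimp}(iii), $G[A \cup K]$ is again a $k$-tree for every component $A$ of $G \setminus K$, because a $k$-simplicial ordering of $G$ with basis $K$ restricts to a $k$-simplicial ordering on each such piece. The path $P$ meets $K$ in at most $k$ vertices, so removing those $K$-vertices splits $P$ into at most $k+1$ sub-paths of total size at least $n-k$, each contained in a single component of $G \setminus K$. Pigeonhole produces a sub-path $P'$ of size at least $(n-k)/(k+1)$, and past the base case $P'$ cannot be trapped in the trivial component $\{v\}$; so $P'$ sits in a $k$-tree $G[A \cup K]$ with $A \neq \{v\}$, to which induction applies.

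To extract a clean $+1$ from the simplicial vertex $v$, I would prepend $v$ to the induced path $Q$ returned by the recursive call. Since $N(v)=K$, this is valid if and only if $Q$ begins at a vertex of $K$ and has no further vertex in $K$. I would therefore strengthen the induction hypothesis to: \emph{for every $k$-tree $H$ with a path of size $n$ and every $k$-clique $K \subseteq V(H)$, there is an induced path of size at least $\log(n-k-1)/(k\log k)$ whose first vertex lies in $K$ and whose remaining vertices lie in $V(H) \setminus K$.} Granted this strengthening, the inductive step yields $f(n) \geq 1 + f((n-k)/(k+1))$, and the inequality $k+1 \leq k^k$ for $k \geq 2$ lets this unwind to the claimed bound $\log(n-k-1)/(k\log k)$.

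\textbf{Main obstacle.} The hard part is closing the strengthened induction. A naive recursion on the strengthened statement alone gains nothing, because the recursive call already returns exactly the path one hands back. I expect the correct set-up to be an interleaved induction in which the outer (original-theorem) call is the only place where the $+1$ is collected, via the simplicial vertex $v$, while the inner (strengthened) call is invoked only to guarantee a compatible endpoint structure for the attachment. The real bookkeeping then lives in choosing the distinguished clique passed to each recursive invocation so that it is exactly the $k$-clique needed to attach the next simplicial vertex, and in verifying that the base cases of both statements match up across the recursion.
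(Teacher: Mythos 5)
Your proposal has a genuine gap, and it is exactly the one you flag as the ``main obstacle'': the strengthened induction is never closed, and in the form you state it, the natural recursion cannot close it. The outer step works because the vertex you prepend is simplicial, so its neighbourhood is exactly $K$ and inducedness is easy to control. But to prove the strengthened statement for a pair $(H,K)$ you would have to start the path at some $x\in K$ and then descend into the chosen component $A$ of $H\setminus K$; the vertex $x$ is a clique vertex, not a simplicial one, and it may have neighbours arbitrarily deep inside $A$. So prepending $x$ to a recursively obtained path in $G[A\cup K]$ can destroy inducedness, and there is no obvious choice of distinguished clique for the inner call that repairs this: whichever vertex of the current clique you try to ``spend'' next carries uncontrolled adjacencies into the rest of the path. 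This is why a single-branch, $+1$-per-level recursion does not go through as written. (The bookkeeping you do carry out --- that $P$ meets $K$ in at most $k$ vertices, that $G[A\cup K]$ is a $k$-tree, and that the recursion $f(n)\ge 1+f((n-k)/(k+1))$ would unwind to the claimed bound --- is all correct; the missing piece is the only hard one.)

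The paper resolves precisely this difficulty by abandoning the single-path recursion. It first uses minimality of $G$ together with the $n$-vertex path to show that every $k$-clique has at most $k+1$ vertices complete to it, then builds an auxiliary rooted tree $T(G)$ on $V(G)\setminus K_0$ whose maximum degree is at most $k^2+1$, hence containing a path of length about $\log n/\log k$. Walking along that tree-path, it maintains $k$ vertex-disjoint induced paths $P_1,\dots,P_k$ whose $k$ current endpoints always form a $k$-clique $K_i$ (the ``frontier''), sliding $K_i$ by one vertex at each step. Every newly attached vertex is adjacent, among all vertices already used, only to the current frontier, which is exactly the control your approach lacks; the price is that the $\ell$ steps are shared among $k$ paths, which is where the factor $k$ in the denominator $k\log k$ comes from. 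If you want to salvage a recursive argument, you would need your induction hypothesis to carry an entire frontier clique of path-endpoints rather than a single starting vertex --- at which point you have essentially reconstructed the paper's construction.
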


\begin{proof}
Let $P$ be a path on $n$ vertices in $G$.  We may assume that $G$ is
minimal with these properties; in other words, if $G$ has a vertex $x$
such that $G\setminus x$ is a $k$-tree and contains $P$, then it
suffices to prove the theorem for $G\setminus x$; so we may assume
that there is no such vertex.  We claim that:
\begin{equation}\label{kp1c}
\longbox{If $K$ is any $k$-clique in $G$, then ${G}\setminus K$
contains at most $k+1$ vertices that are complete to $K$.}
\end{equation}
Proof of (\ref{kp1c}): Whenever $P$ goes from one component of
$G\setminus K$ to another component, it must go through at least one
vertex of $K$.  This implies that $P$ goes through at most $k+1$
components of $G\setminus K$.  On the other hand, $P$ must go through
each component $A$ of $G\setminus K$, for otherwise we can restrict
ourselves to $G\setminus A$, which is a $k$-tree since we can take $K$
as a basis of a $k$-simplicial ordering, and this contradicts the
minimality of $G$.  Hence $G\setminus K$ has at most $k+1$ components;
and by Lemma~\ref{lem:ksimp}~(iv), we deduce that (\ref{kp1c}) holds.

\medskip

Let $K_0$ be a fixed $k$-clique in $G$.  We associate with $G$ a
labelled rooted tree $T(G)$, where each node $v$ has a label $L(v)$
which is a set of $k$-cliques of $G$, with the following properties:
\begin{itemize}
\item
The root $r$ of $T(G)$ is a new node, and its label is
$\{K_0\}$;
\item
$V(T(G))\setminus\{r\} = V(G)\setminus K_0$;
\item
The label of any non-root node $v$ consists of $k$ $k$-cliques of $G$
that contain $v$;
\item
Every $k$-clique of $G$ is in the label of exactly one node of $T(G)$;
\item
For any two nodes $u,v$ in $T(G)$ such that $v$ is a child of $u$,
there is a $k$-clique $K\in L(u)$ such that $v$ is complete to $K$ in
$G$ and
each element of $L(v)$ is a subset of $K\cup \{v\}$.
\end{itemize}
We prove the existence of such a tree $T(G)$ by induction as follows.
If $G$ is a $k$-clique, then $K_0=V(G)$, and $T(G)$ is the tree with a
unique node $r$, whose label is $\{V(G)\}$.  Now suppose that
$G$ is not a $k$-clique.  By Lemma~\ref{lem:ksimp}~(iii), $G$ has a
$k$-simplicial vertex $x$ with $x\notin K_0$.  Let $K$ be the
neighborhood of $x$, so $K$ is a $k$-clique of $G\setminus x$.  By the
induction hypothesis, $G\setminus x$ admits a labelled rooted tree
$T(G\setminus x)$ that satisfies the properties above.  Let $u$ be the
unique node of $T(G\setminus x)$ whose label contains $K$.  Then
$T(G)$ is obtained from $T(G\setminus x)$ by adding $x$ as a child of
$u$, and we set $L(x)= \{(K\cup\{x\})\setminus y \mid$ for all $y\in
K\}$.  Hence the $k$-cliques that contain $x$ are in $L(x)$.  It
follows that every $k$-clique of $G$ is in the label of exactly one
node of $T(G)$.  So all the required properties hold for $T(G)$.


\begin{figure}[htbp]
\begin{center}
\includegraphics[scale=1.2]{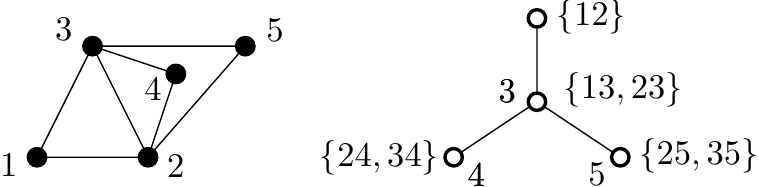}
\caption{A $2$-tree $G$ and a corresponding labelled tree $T(G)$. \label{fig:tree}}
\end{center}
\end{figure}

Now we claim that:
\begin{equation}\label{degtg}
\mbox{Each node of $T({G})$ has degree at most $k^2+1$.}
\end{equation}
Let $u$ be any node of $T({G})$ and $v$ be any child of $u$.  By the
properties of $T({G})$, the vertex $v$ is complete to a member $K$ of
$L(u)$.  By (\ref{kp1c}), at most $k+1$ such vertices exist for each
$K$.  Hence if $u$ is the root, its degree is at most $k+1$.  Now
suppose that $u$ is not the root, and let $t$ be its parent node in
$T(G)$.  By the properties of $T(G)$, $u$ is complete (in $G$) to some
$k$-clique $Q\in L(t)$, and we have $K\subset Q\cup \{u\}$. Note that
the unique vertex in $Q\cup \{u\} \setminus K$ is complete to $K$ and
is either an ancestor of $u$
in $T(G)$, or a vertex of $K_0$, and therefore does not appear among the children of $u$. So $u$ has at most $k(k+1-1)=k^2$ children; hence the degree
of $u$ in $T(G)$ is at most $k^2+1$.  Thus (\ref{degtg}) holds.

\medskip

Let $\ell$ be the size of a longest path in $T({G})$, and let $P=
v_1$-$\cdots$-$v_{\ell}$ be a path in $T({G})$.  We claim that:
\begin{equation}\label{tpath}
\ell\ge\frac{\log (n-k-1)}{\log k}.
\end{equation}
Since $|V(G)|\ge n$, it follows that $T({G})$ has at least $n-k+1$
nodes.  First suppose that $\ell$ is odd.  Let $m=(\ell+1)/2$.  So the
vertex $v_m$ is the middle vertex of $P$, and every vertex of $T(G)$
is at distance at most $m-1$ from $v_m$.  It follows that $n-k+1\le 1+
(k^2+1) (k^2)^{m-2}$, so $n-k\le k^{2m-1}$, whence $\ell \ge
\frac{\log(n-k)}{\log k}$.  \\
Now suppose that $\ell$ is even.  Let $m=\ell/2$.  So the edge
$v_mv_{m+1}$ is the middle edge of $P$, and every vertex of $T(G)$ is
at distance at most $m-1$ from one of $v_m, v_{m+1}$.  It follows that
$n-k+1\le 2+ (k^2)^{m-1}$, so $n-k-1\le k^{2m-2}$, whence $\ell \ge
\frac{\log(n-k-1)}{\log k} +2$.  Thus (\ref{tpath}) holds.

\medskip

Using the path $P$ we construct, by induction on $i=1, \ldots, \ell$,
a collection of $k$ vertex-disjoint induced paths $P_1, \dots, P_k$ in
${G}$, adding one vertex of $G$ at each step, so that the following
properties hold at each step~$i$, where $u_{j,i}$ is the last vertex
of $P_j$:
\begin{quote}
$\bullet$ The set $K_i=\{u_{1,i}, \ldots, u_{k,i}\}$ is a $k$-clique
of ${G}$, with $K_i\in L(v_i)$; \\
$\bullet$ If $i\ge 2$ then $|K_{i-1}\cap K_i|= k-1$.
\end{quote}
We do this as follows.  First pick one member $K_1$ of $L(v_1)$, and
for each $j=1,\ldots, k$ let the first vertex of $P_j$ be the $j$-th
vertex of $K_1$.

At step $i+1$, we consider two cases since $v_i$ is either a child of
$v_{i+1}$ or its parent.

Suppose that $v_i$ is a child of $v_{i+1}$.  Let $x_i$ be the vertex
of $G$ in $L(v_i)$, and let $K_{i+1}$ be the $k$-clique in
$L(v_{i+1})$ such that $x_i$ is complete to $K_{i+1}$.  Then
$|K_{i+1}\cap K_i|= k-1$.  Let $j$ be the unique integer such that
$u_{j,i} \in K_i\setminus K_{i+1}$, and consider the unique vertex $y
\in K_{i+1}\setminus K_i$.  Then, we take $u_{j,i+1}=y$ and
$u_{a,i+1}=u_{a,i}$ for all $a \neq j$.  In this case the vertices
$v_1, \dots, v_{i-1}$ are all descendants of $v_i$ in $T({G})$, and
have been added in the construction of $T({G})$ as descendants of the
clique $K_i$.  Since $u_{j,i+1}=y \not\in K_i$, among vertices of
$P_1, \dots, P_k$ the vertex $u_{j,i+1}$ is adjacent only to $u_{1,i},
\dots, u_{k,i}$; so the paths $P_1, \dots, P_k$ remain induced.

Now suppose that $v_i$ is the parent of $v_{i+1}$.  First suppose that
$i+1 \neq \ell$.  Then $v_{i+2}$ is a child of $v_{i+1}$.  Let
$x_{i+2}$ be the vertex in $L(v_{i+2})$, and let $K_{i+1}$ be the
$k$-clique in $L(v_{i+1})$ such that $x_{i+2}$ is complete to
$K_{i+1}$.  Then $|K_{i+1}\cap K_i|=k-1$.  Let $j$ be the unique
integer such that $u_{j,i} \in K_i\setminus K_{i+1}$, and let $y$ be
the unique vertex in $K_{i+1}\setminus K_i$.  Then we take
$u_{j,i+1}=y$ and $u_{a,i+1}=u_{a,i}$ for all $a \neq j$.  Since the
only neighbors of $y$ are the vertices in $K_i$ and vertices
corresponding to descendants of $v_{i+1}$ (which are not vertices in
the paths $P_1, \dots, P_k$), the paths $P_1, \dots, P_k$ remain
induced.

Finally suppose that $i+1 = \ell$.  Let $x_\ell$ be the vertex of $G$
that belongs to $L(v_\ell)$.  Then we can add $x_\ell$ to any of the
paths, say to $P_1$.  Since $x_\ell$ is adjacent to the vertices
$u_{1,i}, \dots, u_{k,i}$ only, the paths $P_1, \dots, P_k$ remain
induced.  This completes the construction of these paths.

Since $P$ has size $\ell$, there are $k+ \ell-1$ vertices in $P_1 \cup
\dots \cup P_k$.  These paths are disjoint, so one of them has size
at least $\frac{\ell+k-1}{k}\ge \frac{\ell}{k}$.

In summary, if $G$ contains a path of size $n$, then it contains an
induced path of size $\frac{\log (n-k-1)}{k \log k}$.  This completes the proof of the theorem.
\end{proof}

This bound is optimal, up to a constant multiplicative factor of $2k
\log k$.  To see this, consider the family of graphs $G_i$ depicted in
Figure~\ref{outerplanar}.  These examples were found by Arocha and
Valencia~\cite{AV}.  The graph $G_0$ is a triangle, and $G_i$ is
obtained from $G_{i-1}$ by adding, for each edge $uv$ created at step
$i-1$, a new vertex adjacent to $u$ and $v$.  Clearly these graphs are
outerplanar and 2-trees.  Moreover they are Hamiltonian.  The graph $G_i$ has
$n=3\times2^{i}$ vertices and therefore contains a path with the same
number of vertices, while it is easy to check that the longest induced
path in $G_i$ has size $2(i+1)=2 \log n+(2-2 \log 3)$.

Now, add $k-2$ universal vertices to each $G_i$. We obtain again
Hamiltonian $k$-trees with $n$ vertices in which all induced paths have
size at most $2 \log n$, as desired. It would be interesting to construct
examples such the size of the longest induced paths decreases as $k$
grows (for instance of order $\tfrac{\log n}{\log k}$). We have not
been able to do so.

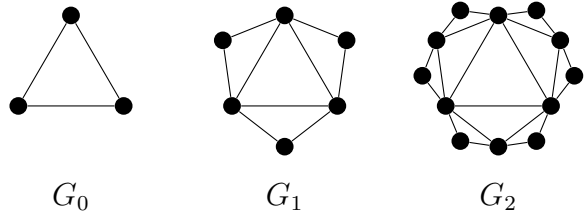
\begin{figure}[h]
   \begin{center}
   \begin{tikzpicture}[scale=0.2]
     \begin{scope}
\node[sommetn] (1) at (90:4) {};
\node[sommetn] (2) at (210:4) {};
\node[sommetn] (3) at (330:4) {};
\draw (1) -- (2) -- (3) -- (1);
\node at (270:8) {$G_0$};
\end{scope}
   \begin{scope}[xshift=400]
\node[sommetn] (1) at (90:4) {};
\node[sommetn] (2) at (210:4) {};
\node[sommetn] (3) at (330:4) {};
\node[sommetn] (4) at (150:4.7) {};
\node[sommetn] (5) at (270:4.7) {};
\node[sommetn] (6) at (30:4.7) {};
\draw (1) -- (2) -- (3) -- (1);
\draw (1) -- (4) -- (2) -- (5) -- (3) -- (6) -- (1);
\node at (270:8) {$G_1$};
\end{scope}
   \begin{scope}[xshift=800]
\node[sommetn] (1) at (90:4) {};
\node[sommetn] (2) at (210:4) {};
\node[sommetn] (3) at (330:4) {};
\node[sommetn] (4) at (150:4.7) {};
\node[sommetn] (5) at (270:4.7) {};
\node[sommetn] (6) at (30:4.7) {};
\node[sommetn] (7) at (120:5) {};
\node[sommetn] (8) at (180:5) {};
\node[sommetn] (9) at (240:5) {};
\node[sommetn] (10) at (300:5) {};
\node[sommetn] (11) at (0:5) {};
\node[sommetn] (12) at (60:5) {};
\draw (1) -- (2) -- (3) -- (1);
\draw (1) -- (4) -- (2) -- (5) -- (3) -- (6) -- (1);
\draw (1) -- (7) -- (4) -- (8) -- (2) -- (9) -- (5) -- (10) -- (3) -- 
(11) -- (6) -- (12) -- (1);

\node at (270:8) {$G_2$};
\end{scope}
   \end{tikzpicture}
   \end{center}
   \caption{A family of outerplanar graphs}
   \label{outerplanar}
\end{figure}

\subsection{Partial $2$-trees}

A \emph{partial $k$-tree} is any subgraph of a $k$-tree.  The
\emph{tree-width} of a graph $G$ is the least $k$ such that $G$ is a
partial $k$-tree.  Note that Theorem~\ref{ktree} has no direct
corollary on the size of long induced paths in partial $k$-trees in
general, but we can still deduce an asymptotically optimal bound for
the class of partial $2$-trees.  Before doing so, we prove the
following lemma which will be useful in several proofs.  Recall that a
class of graphs is called \emph{hereditary} if it is closed under
taking induced subgraphs.

\begin{lemma}\label{2co}
Let $\mathcal{F}$ be a hereditary family of graphs.  Suppose that
there are reals $\alpha,\beta>0$ such that every $2$-connected graph
in $\mathcal{F}$ that contains an $n$-vertex path contains an induced
path of size $\alpha (\log n)^\beta$.  Let $G$ be any connected graph
in $\mathcal{F}$ that contains an $n$-vertex path.  Then $G$ contains
an induced path of size $\alpha (\log n -
\log(\alpha (\log n)^\beta))^\beta=(\alpha -o(1))(\log n)^\beta$.
\end{lemma}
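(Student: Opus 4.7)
The plan is to reduce to the $2$-connected case via the block tree of $G$. Let $P$ be a path of size $n$ in $G$, and let $B_1,\ldots,B_t$ be the blocks visited by $P$ in order, with $c_i\in B_i\cap B_{i+1}$ the cut-vertex between two consecutive blocks; the simplicity of $P$ together with the tree structure of blocks implies that $P$ enters and leaves each $B_i$ at most once. Set $L=\alpha(\log n)^\beta$. I would split the argument according to whether $P$ is concentrated in one block or spread across many, using the threshold of $n/L$ vertices of $P$ per block.

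In the first case, some $B_i$ contains at least $n/L$ vertices of $P$; for $n$ large enough, $B_i$ is then $2$-connected (it is not a single edge), and it lies in $\mathcal{F}$ by hereditariness, so the hypothesis applied to $B_i$ yields an induced path of size $\alpha(\log(n/L))^\beta=\alpha(\log n-\log L)^\beta$, which is an induced path of the desired size in $G$ since $B_i$ is an induced subgraph. In the second case, every block satisfies $|V(P)\cap V(B_i)|<n/L$; summing these inequalities over $i$ (and noting that each cut-vertex is counted in exactly two blocks) yields $t>L$. I would then construct a path $Q$ by choosing, for each $i=2,\ldots,t-1$, a shortest path $R_i$ from $c_{i-1}$ to $c_i$ inside $B_i$ and concatenating these along the shared cut-vertices, giving a walk from $c_1$ to $c_{t-1}$ that passes through every $c_i$. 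The resulting $Q$ has size at least $t-1\ge L-1=(\alpha-o(1))(\log n)^\beta$.

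The main obstacle is verifying that $Q$ is actually an induced path. I would use two standard facts about the block tree: (a) any two distinct blocks of $G$ share at most one vertex, so an interior vertex of $R_i$ belongs to $B_i$ only and therefore has no neighbours outside $B_i$; and (b) for $|i-j|\ge 2$, the cut-vertices $c_i$ and $c_j$ lie in no common block, since otherwise such a block together with the block-tree path from $c_i$ through $B_{i+1},c_{i+1},\ldots,B_j$ to $c_j$ would form a cycle in the block tree. Since each $R_i$, being a shortest path in $B_i$, is induced, facts (a) and (b) together rule out every potential chord of $Q$, so $Q$ is induced and of the required size, completing the argument.
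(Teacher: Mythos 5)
Your proof is correct and follows essentially the same route as the paper: decompose along the block tree, and either some block contains a long subpath of $P$ (apply the $2$-connected hypothesis there) or the path meets many blocks, in which case concatenating shortest paths between consecutive cut-vertices gives a long induced path. The only nitpick is that in your fact (a) an interior vertex of $R_i$ may lie in further blocks of $G$ outside the chain (if it is itself a cut-vertex), but no vertex of $Q$ lies in such a block, so the argument goes through; your verification that $Q$ is induced is in fact more detailed than the paper's.
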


\begin{proof}
Let $P$ be an $n$-vertex path in $G$.  Let $T$ be the block tree of
$G$.  Let $k$ be the number of blocks intersecting $P$.  If $k
\leq \alpha (\log n)^\beta$, then there is a block that contains a
subpath of $P$ with at least $\frac{n}{\alpha (\log n)^\beta}$
vertices.  By the hypothesis this block has an induced path of size
$\alpha \log(\frac{n}{\alpha (\log n)^\beta})^\beta = \alpha (\log n -
\log(\alpha (\log n)^\beta))^\beta=(\alpha -o(1))(\log n)^\beta$.  On
the other hand, if $k > \alpha (\log n)^\beta$, then there is a
path of $k$ blocks in $T$, which means that there are $k$ distinct
blocks $B_1, \dots B_k$ such that $B_i$ has exactly one vertex $v_i$
in common with $B_{i+1}$, and the vertices $v_1,\ldots,v_{k-1}$ are
pairwise distinct, and $v_iv_j$ is not an edge whenever $|i-j|\ge 2$.
For each $i=1,\ldots,k-1$ let $P_i$ be a shortest path between $v_i$
and $v_{i+1}$ in $B_{i+1}$.  Then we obtain a path
$v_1$-$P_1$-$v_2$-$P_2$-$\dots$-$v_{k-1}$ of size at least $\alpha
(\log n)^\beta$.
\end{proof}

We now consider partial $2$-trees.  Recall that every $2$-tree is a
planar graph (because a simplicial vertex of degree $2$ can be added
to any planar graph in a way that preserves planarity).

\begin{theorem}\label{partial}
If $G$ is a 2-connected partial $2$-tree that contains an $n$-vertex path, then
$G$ contains an induced path of size $\frac{\log (n-3)}{2}$.
\end{theorem}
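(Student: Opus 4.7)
The plan is to adapt the proof of Theorem~\ref{ktree} with $k=2$ to partial $2$-trees. Since every partial $2$-tree is a spanning subgraph of a $2$-tree, I would first embed $G$ into a $2$-tree $H$ on the same vertex set; the $n$-vertex path $P$ of $G$ is still a path of $H$. Fix an edge $K_0$ of $H$ and build the labelled rooted tree $T(H)$ exactly as in Theorem~\ref{ktree}, whose non-root nodes are in bijection with $V(G)\setminus K_0$.

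Assuming that $G$ is minimal in the sense that no proper induced subgraph of $G$ is a $2$-connected partial $2$-tree containing $P$, the analogue of property~(\ref{kp1c}) should go through: for every edge $K$ of $H$, the graph $G\setminus K$ has at most $3$ components, since $P$ must visit each component and can switch components only through the at most $2$ vertices of $K$. Adding edges can only merge components, so $H\setminus K$ also has at most $3$ components; by Lemma~\ref{lem:ksimp}(iv) applied to $H$, each such component contributes exactly one vertex complete to $K$ in $H$, and so every node of $T(H)$ has degree at most $k^2+1=5$. The same counting as in Theorem~\ref{ktree} then yields a path of size at least $\log(n-3)/\log 2 = \log(n-3)$ in $T(H)$.

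Finally, I would imitate the construction of two vertex-disjoint induced paths $P_1,P_2$ from the long path of $T(H)$. The main subtlety, and the principal obstacle of the proof, is that each edge added to $P_1$ or $P_2$ must belong to $G$, not merely to $H$. When the construction in Theorem~\ref{ktree} would require an edge $uv\in H\setminus G$, one natural fix is to replace it by an induced $u$--$v$ path of $G$ (which exists by $2$-connectivity), chosen to avoid already used vertices and to create no chord with $P_1\cup P_2$; a cleaner alternative would be to build the labelled tree directly from the series-parallel decomposition of $G$, so that only edges of $G$ are ever used. Granted that such a construction can be carried out, the two paths together contain at least $\log(n-3)+1$ vertices, so the longer one has size at least $\log(n-3)/2$, as required.
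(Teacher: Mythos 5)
Your overall strategy coincides with the paper's: complete $G$ to a $2$-tree $G'$, run the machinery of Theorem~\ref{ktree} with $k=2$ to obtain two vertex-disjoint induced paths of total size at least $\log(n-3)+1$ in $G'$, and then repair the edges of the chosen path that are absent from $G$ using $2$-connectivity. The set-up (minimality, the degree bound on the labelled tree, the counting) is essentially right, modulo one remark: the minimality should be imposed on the $2$-tree $G'$ (if $P$ avoids a component $A$ of $G'\setminus K$, then $G'\setminus A$ is still a $2$-tree containing $P$ by Lemma~\ref{lem:ksimp}(iii)), not on $G$ itself, since deleting a component of $G\setminus K$ need not preserve the $2$-connectivity of $G$, so your stated minimality hypothesis does not directly force $P$ to meet every component.

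The genuine gap is exactly the step you label the ``principal obstacle'' and then assume away with ``granted that such a construction can be carried out.'' Showing that the replacement paths can be chosen ``to avoid already used vertices and to create no chord with $P_1\cup P_2$'' is the heart of the proof and requires an argument; $2$-connectivity alone only gives existence of \emph{some} $u$--$v$ path, which could perfectly well pass through, or send chords to, vertices already placed on $P_1\cup P_2$. The paper's resolution is structural: it first finishes the construction entirely inside $G'$, observes that the vertices used induce a \emph{path of triangles} whose outer face splits into two induced paths between its two simplicial vertices, selects the longer one $P_{G'}$, and only then replaces each missing edge $uu'$ by a shortest $u$--$u'$ path of $G$ avoiding the apex $w$ of the triangle on $uu'$. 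The fact that makes this work is that $2$-trees are $K_4$-minor-free: this forces the internal vertices of each replacement path to be disjoint from $P_{G'}$, and forces distinct replacement paths to share no internal vertices and to have no edges between themselves or to $P_{G'}$ except at their endpoints. Your interleaved ``fix each missing edge as you go'' scheme omits precisely this justification, and your alternative of decomposing $G$ directly as a series-parallel graph is a plan rather than a proof. As written, the proposal identifies the difficulty correctly but does not overcome it.
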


\begin{proof}
Let $P$ be an $n$-vertex path in $G$. We add edges to $G$ in order
to obtain a $2$-tree $G'$.  As before, we can assume that $G'$ is a
minimal 2-tree containing $P$.

Using the proof of Theorem \ref{ktree}, we
can find in $T({G'})$ a path $P'$ of size $\ell\ge\log(n-3)$.  We
denote by $U$ the set of vertices of ${G'}$ corresponding to the
vertices of $P'$ (i.e., $U$ consists of the union of the cliques $K_i$
on 2 vertices defined in the proof of Theorem \ref{ktree}). The
subgraph of $G'$ induced by $U$ is denoted by $G'[U]$.

Given a planar embedding of a connected planar graph $G$, we define
the \emph{dual} graph $G^*$ of $G$ as follows: the vertices of $G^*$
are the faces of $G$, and two vertices $v_1^*$ and $v_2^*$ of $G^*$
are adjacent if and only if the corresponding faces of $G$ share an
edge.  We call \emph{weak dual} of $G$ the graph obtained from $G^*$
by deleting the vertex that represents the external face of $G$.

We call \emph{path of triangles} a $2$-tree having a plane embedding
whose weak dual is a path. It directly follows from our definition of the
cliques $K_i$ in the proof of Theorem \ref{ktree} that $G'[U]$ is a
path of triangles. From now on we fix a planar embedding of $G'[U]$
such that its weak dual is a path.

Since $G'[U]$ is a path of triangles, it has exactly two simplicial
vertices $a,b$, both of degree~$2$, and the other vertices of $G'[U]$ are
not simplicial.  Following the proof of Theorem~\ref{ktree}, the outer
face of $G'[U]$ can be partitioned into two paths going from $a$ to $b$,
and these two paths are induced paths in ${G'}$, so one of them has
size at least $\frac{\log (n-3)}{2}$.  We denote this path by $P_{G'}$.

Since $G$ is a subgraph of $G'$, some edges of $P_{G'}$ may not be in $G$;
we call them \emph{missing edges}.  Consider any missing edge $uu'$ of
$P_{G'}$, and let $w$ be the third vertex of the triangular inner face
of $G'[U]$ incident with $uu'$.  Since $G$ is $2$-connected, there is a
path $P_{uu'}$ in $G$ between $u$ and $u'$ avoiding $w$. The internal
vertices of such a path are necessarily disjoint from $P_{G'}$, since
otherwise $G'$ would contain $K_4$ as a minor (and it is well known
that any 2-tree is $K_4$ minor-free). We can assume without loss of
generality that $P_{uu'}$ is an induced path, by taking a shortest
path with the aforementioned properties. Using again that
$G'$ does not contain $K_4$ as a minor, it is easy to see that if 
$uu'$ and $vv'$ are
two missing edges, then the two paths $P_{uu'}$ and $P_{vv'}$ have no
internal vertex in common, no edges between their internal
vertices, and no edge from their internal vertices to $P_{G'}$ 
(except possibly to the endpoints of their
respective paths). Now, replacing
every missing edge $uv$ with the corresponding path $P_{uv}$, we get
an induced path $P_G$ that is at least as long as $P_{G'}$, and so $G$
contains an induced path of size $\frac{\log (n-3)}{2}$.
\end{proof}

As a direct consequence of Theorem~\ref{partial} and Lemma~\ref{2co},
we obtain:

\begin{corollary}\label{co:partial}
If $G$ is a partial $2$-tree that contains an $n$-vertex path, then
$G$ contains an induced path of size $(\frac{1}{2}-o(1))\log n$.
\end{corollary}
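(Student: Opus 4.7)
The plan is to derive the corollary by feeding Theorem~\ref{partial} into Lemma~\ref{2co} with $\mathcal{F}$ equal to the class of partial $2$-trees.

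First I would check that the two hypotheses of Lemma~\ref{2co} are met. The class of partial $2$-trees is exactly the class of graphs of tree-width at most $2$, and tree-width cannot increase when passing to an induced subgraph, so this class is hereditary. Moreover, if $G$ is a partial $2$-tree containing an $n$-vertex path, that path lies entirely inside a single connected component of $G$, so I may restrict attention to a connected partial $2$-tree, which is exactly the setting of Lemma~\ref{2co}.

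Next, I would set up the parameters $\alpha$ and $\beta$. Theorem~\ref{partial} states that any $2$-connected partial $2$-tree with an $n$-vertex path contains an induced path of size $\tfrac{\log(n-3)}{2}$. Since $\tfrac{\log(n-3)}{2}=\tfrac{\log n}{2}-o(1)$, for every $\varepsilon>0$ this quantity exceeds $(\tfrac12-\varepsilon)\log n$ once $n$ is large enough. Thus the hypothesis of Lemma~\ref{2co} holds with $\alpha=\tfrac12-\varepsilon$ and $\beta=1$, and the lemma's conclusion gives an induced path in $G$ of size $(\tfrac12-\varepsilon-o(1))\log n$. Letting $\varepsilon\to 0$ yields the desired bound $(\tfrac12-o(1))\log n$.

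There is no real obstacle here; the only point requiring a little care is the mismatch between the $\log(n-3)$ coming from Theorem~\ref{partial} and the cleaner $\log n$ form demanded by Lemma~\ref{2co}, and this is absorbed either by the $\varepsilon$ trick above or simply by noting that both lower-order corrections are already $o(\log n)$ and can be merged into the final $o(1)$ term.
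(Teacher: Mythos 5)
Your proposal is correct and matches the paper exactly: the corollary is stated there as a direct consequence of Theorem~\ref{partial} and Lemma~\ref{2co}, applied to the hereditary class of partial $2$-trees with $\beta=1$. Your handling of the $\log(n-3)$ versus $\log n$ discrepancy via the $\varepsilon$ trick is a reasonable way to make the "direct consequence" rigorous.
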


\section{Induced paths in planar and outerplanar graphs}

Since an outerplanar graph is a partial $2$-tree, we also obtain the
following corollary.

\begin{corollary}\label{outer}
If $G$ is an outerplanar graph with an $n$-vertex path, then $G$
contains an induced path of size $\frac{\log n}{2}(1-o(1))$.
\end{corollary}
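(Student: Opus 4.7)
The plan is to reduce directly to Corollary \ref{co:partial}, once we observe that every outerplanar graph is a partial $2$-tree. With this containment in hand, Corollary \ref{co:partial} applied to $G$ produces an induced path of size $(\tfrac{1}{2}-o(1))\log n$ and the proof is complete; no further use of the outerplanar hypothesis is needed.

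The only nontrivial input is the classical structural fact that every outerplanar graph has tree-width at most $2$, i.e.\ embeds as a subgraph of a $2$-tree. I would justify this in a self-contained way by exhibiting a $2$-tree supergraph $G' \supseteq G$ as follows. Fix an outerplanar embedding of $G$, and keep adding chords inside bounded faces (and, if necessary, a few extra edges and/or vertices to close the outer boundary into a cycle) until the result $G'$ is a maximal outerplanar graph: a triangulation of a polygon, with every vertex on the outer face and every inner face a triangle. A standard ``two ears'' property of triangulated polygons then supplies a vertex of degree $2$ in $G'$, whose two neighbors are joined by an edge of the triangulation; such a vertex is $2$-simplicial. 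Deleting it yields a smaller triangulated polygon, so by induction $G'$ is a $2$-tree, and hence $G$ is a partial $2$-tree.

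There is no real obstacle here: all of the analytic content has already been absorbed into Theorem \ref{partial} and Corollary \ref{co:partial}, and the embedding argument above is purely structural. The only mild care needed is to handle trivial degenerate cases (isolated vertices, trees with few vertices) when closing the outer boundary of $G$, but since we are free to add edges and vertices when building $G'$, these cases cause no difficulty.
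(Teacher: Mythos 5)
Your proposal is correct and matches the paper's derivation exactly: the corollary is obtained by observing that every outerplanar graph is a partial $2$-tree and invoking Corollary~\ref{co:partial}. (The paper additionally sketches an alternative direct proof via Theorem~\ref{2coout} and Lemma~\ref{2co}, but the primary route is the one you took.)
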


We can give an alternative proof of this corollary.  We give the proof
in the case where the graph is $2$-connected.  If it is not, we can
use the Lemma \ref{2co}.  This proof is quite similar to the proof in
\cite{AV}.

\begin{theorem}\label{2coout}
If $G$ is a $2$-connected outerplanar graph with $n$ vertices, then
$G$ contains an induced path of size $\frac{\log n}{2}$.
\end{theorem}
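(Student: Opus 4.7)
The plan is to adapt the approach of Arocha and Valencia~\cite{AV}, working directly with the planar dual of $G$ rather than the abstract labelled tree of Theorem~\ref{ktree}. First I would augment $G$ to a maximal outerplanar graph $G'$ by adding chords of the outer Hamilton cycle of $G$ (recall that the outer face of a $2$-connected outerplanar graph is a Hamilton cycle). Then $G'$ is a $2$-tree, every inner face of $G'$ is a triangle, and the weak dual $T$ of $G'$ is a tree on $n-2$ nodes with maximum degree at most $3$. A standard BFS counting argument (a ball of radius $r$ around any node of $T$ contains at most $3\cdot 2^r-2$ nodes) produces a path $F_1\text{-}F_2\text{-}\cdots\text{-}F_m$ of triangular faces in $T$ with $m\ge \log n$ (up to an additive constant).

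The union $F_1\cup\cdots\cup F_m$ is a path of triangles, in exactly the sense used in the proof of Theorem~\ref{partial}, and its outer boundary is a cycle of length $m+2$. By the same argument as in that proof, this boundary cycle decomposes into two induced paths of $G'$ joining the two simplicial vertices of the path of triangles, together containing all $m+2$ vertices; so the longer of the two has at least $\lceil (m+2)/2 \rceil + 1 \ge \tfrac{\log n}{2}$ vertices.

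Finally one must pass from $G'$ back to $G$: some edges of the induced path in $G'$ may be chords that we added during the augmentation. Each such missing edge $uv$ would be replaced by a shortest $uv$-path in $G$ avoiding the third vertex of the triangular face $uvw$ of $G'$, exactly as in the proof of Theorem~\ref{partial}. The $K_4$-minor-freeness of $G'$ (which is a $2$-tree) ensures that these detour paths are pairwise internally vertex-disjoint and create no new adjacencies to the main path, so after all substitutions we obtain an induced path in $G$ of size at least $\tfrac{\log n}{2}$. The main obstacle is this missing-edge step; but since the structural setup---a path of triangles embedded in a $2$-connected $K_4$-minor-free graph---is identical to that of Theorem~\ref{partial}, the detour argument transfers with essentially no change.
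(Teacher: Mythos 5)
Your argument is correct, but it is not the route the paper takes for this theorem; it is a hybrid of the paper's two proofs. The paper's proof of Theorem~\ref{2coout} also finds a long path in the weak dual $D'$ of the maximal outerplanar augmentation $G'$ via the degree-$3$/diameter count, but it then \emph{projects} that path down to a path in the weak dual $D$ of $G$ itself (checking, via the tree structure of $D'$, that the projection is still a simple path, and that its total weight is at least the length of the original path), and finally takes the two boundary paths of the resulting path of faces of $G$. Because these are faces of $G$, there are no missing edges to repair. You instead stay in $G'$, extract an induced boundary path of a path of triangles of $G'$, and then invoke the missing-edge detour argument from the proof of Theorem~\ref{partial}. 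Both routes work: yours is shorter once the detour machinery of Theorem~\ref{partial} is in hand (and indeed your proof is close to the derivation of Corollary~\ref{outer} from Theorem~\ref{partial}, except that you find the long dual path by the diameter bound rather than via the minimal $2$-tree and the tree $T(G')$ of Theorem~\ref{ktree}); the paper's route is self-contained, avoids the $K_4$-minor detour lemma entirely, and exploits the fact that large faces of $G$ contribute proportionally many vertices to the boundary path.

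One step deserves more care than ``by the same argument as in that proof.'' In Theorem~\ref{partial} the two boundary paths are shown to be induced in $G'$ by tracking the explicit construction of the cliques $K_i$ from Theorem~\ref{ktree}; your path of triangles is an arbitrary path in the weak dual, so that argument does not literally apply. The claim is still true, but you should justify it: (a) the subgraph of $G'$ induced by the vertices of $F_1\cup\cdots\cup F_m$ contains no edges beyond those of the triangles $F_i$, because each component of $T\setminus\{F_1,\dots,F_m\}$ attaches to the region $F_1\cup\cdots\cup F_m$ along a single edge, so its vertices meet that region only in the two endpoints of that edge; and (b) every chord of the boundary cycle of the path of triangles is an edge shared by two consecutive triangles and hence separates the two simplicial vertices $a,b$, so it cannot have both endpoints interior to one of the two $a$--$b$ boundary paths. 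With that filled in (and the additive constants from the BFS count checked, which do come out above $\tfrac{\log n}{2}$), the proof is complete.
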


\begin{proof}
Let $G$ be a $2$-connected outerplanar graph with $n$ vertices. We 
add edges to $G$ in order to
obtain a maximal outerplanar graph $G'$. We denote by $D$ and $D'$ the
weak duals of $G$ and $G'$, respectively. Note that $D$ and $D'$ are trees.

Each face of $G$ with $k$ vertices contains $k-2$ triangular faces of
$G'$, and for each vertex in $D$ corresponding to a $k$-vertex face,
we have a tree with $k-2$ vertices in $D'$.

Let $d$ be the diameter of $D'$, and let $m$ be the number of vertices
of $D'$.  We have $m \geq n-2$.  Consider a leaf $v$ of $D'$: it has
degree $1$, and its neighbor $u$ has degree at most $3$ (since each
vertex of $D'$ has degree at most $3$), and therefore at most $2$
neighbors distinct from $v$.  Each vertex of $D'$ is reachable from 
$v$ with a path of at most
$d-1$ edges, so we have $m \leq 2^{d-2}$.  Then we have $n \leq
2^{d-2}+2$, so $d \geq \log n$, and so there is a path $P'$ of size
$\ell' \geq \log n$ in $D'$.

We associate with each vertex $v$ of $D$, corresponding to a
$k$-vertex face of $G$, a weight of $k-2$ (which is the number of vertices in
$D'$ in the tree corresponding to $v$).  The \emph{weight} of a path
$P$ of $D$ is defined as the sum of the weights of its vertices.  Then a path
$P$ in $D$ of weigth $w$ corresponds in $G$ to a path of faces (a
sequence of faces in which any two consecutive faces share an edge), 
with $w-2$ vertices.

Let $P' = u'_1$-$\cdots$-$u'_{\ell'}$.  Each vertex $u'_i$ corresponds
to a face $F'_i$ of $G'$, which corresponds to a face $F_j$ of $G$ and
a vertex $u_j$ of $D$.  Then, we have $P' = u'_{i_1}$-$\cdots$-$
u'_{i_2-1}$-$u'_{i_2}$-$u'_{i_2+1}$-$\cdots$-$u'_{i_3}$-$\dots$-$u'_{i_s}$-$
\dots$-$u'_{i_s+t}$ with $i_1=1$, $i_s+t=\ell'$ and for $a=1, \dots,
s$, the vertices $u'_{i_a}, \dots , u'_{i_{a+1}-1}$ corresponding to
$u_{j_a}$ in $D$.  Let $P=u_{j_1}$-$\cdots$-$u_{j_s}$.  For each $a=1,
\dots, s-1$, $u_{j_a}$ is adjacent to $u_{j_{a+1}}$ because
$u'_{i_{a+1}-1}$ and $u'_{i_{a+1}}$ are adjacent in $D'$.  Moreover,
we claim that each vertex is present only once in $P$.  Suppose not.
Then we denote by $u_{j_a}$, corresponding to a face $F_a$ of $G$, a
vertex which is present several times in $P$, and by $b$ the smallest
index larger than $a$ such that $u_{j_a} = u_{j_b}$.  In $D'$, there
is a path between $u'_{i_{a+1}-1}$ and $u'_{i_b}$ in the tree
corresponding to $u_{j_a}$, and a path
$u'_{i_{a+1}-1}$-$u'_{i_{a+1}}$-$ \cdots$-$u'_{i_b-1}$-$u'_{i_b}$,
with no vertex from the tree corresponding to $u_{j_a}$.  Then there
is a cycle in the tree $D'$, which is a contradiction.  Therefore $P$
is a path.

Let $\ell$ be the weight of $P$.  Then we have $\ell \geq \ell'$,
because each vertex $u$ of $P$, corresponding to a $k$-vertex face,
has a weight $k-2$, and comes from $k' \leq k-2$ vertices in $D'$.
Therefore, $\ell \geq \log n$.

In $G$, $P$ corresponds to a path of faces separated by edges, with
$\ell+2$ vertices.  If we remove a vertex from each extremal face of
$P$, we get two induced paths in $G$, so one of them has size
$\frac{\ell}{2} \geq \frac{\log n}{2}$.
\end{proof}

This bound is optimal up to a constant multiplicative factor, as we
have seen before (recall that the graphs $G_i$ depicted in
Figure \ref{outerplanar} have $n=3\times2^{i-1}$ vertices
and their longest induced path have size $2(i+1)=2 \log
n+(2-2 \log 3)$).

The following theorem, proved in \cite{GLM}, gives a bound on the
largest induced outerplanar graph in a $3$-connected planar graph.
Their proof uses the existence of so-called ``{Schnyder woods}'' to
define some partial orders, followed by an application of Dilworth
theorem on these partial orders.

A \emph{bracelet} is a connected outerplanar graph where each
cut-vertex is shared by two blocks, and each block contains at most
two cut-vertices.

\begin{theorem}[\cite{GLM}]\label{dglm}
Any $3$-connected planar graph with $n$ vertices contains an induced
bracelet with at least $\sqrt[3]{n}$ vertices.
\end{theorem}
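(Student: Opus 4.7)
The plan is to follow the hint given in the paragraph preceding the statement: combine a Schnyder wood on a 3-connected planar graph with a Dilworth-type argument on a three-dimensional partial order, and read off the bracelet from a long chain. First I would reduce to the case where $G$ is a plane triangulation: since $G$ is 3-connected it has (essentially) a unique plane embedding, and one can triangulate the interior faces while keeping 3-connectivity. One must be slightly careful because we ultimately want an induced bracelet in $G$ rather than in the triangulation; but the candidate vertex set that we will exhibit will only pick up edges from the triangulation that already lie on the outer boundary of nested Schnyder regions, and these edges will belong to $G$.

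Next I would equip the triangulation with a Schnyder wood: an orientation and $3$-coloring of the interior edges so that each interior vertex has exactly one outgoing edge of each color and the outgoing edges appear in a fixed cyclic order around every vertex. This decomposes the interior into three spanning trees $T_1,T_2,T_3$ rooted at the three outer vertices $a_1,a_2,a_3$. For each interior vertex $v$, the three directed paths $P_i(v)$ from $v$ to $a_i$ in $T_i$ are internally disjoint and partition the inside of the graph into three closed regions $R_1(v),R_2(v),R_3(v)$ (with $R_i(v)$ opposite $a_i$). Define partial orders by $u\le_i v$ iff $u\in R_i(v)$. It is a standard fact about Schnyder woods that these three orders form a realizer of a $3$-dimensional partial order on $V(G)$.

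Now I would apply the iterated Dilworth / Erd\H os--Szekeres argument for three-dimensional orders: in any such order on $N$ elements one can always find a chain of size at least $\lceil N^{1/3}\rceil$ in one of the three component orders. Applied to the $n$ vertices of $G$, this produces a sequence $v_1,\ldots,v_k$ with $k\ge \sqrt[3]{n}$ such that, say, $v_1<_1 v_2<_1\cdots<_1 v_k$. Chains in $\le_1$ correspond to strictly nested regions $R_1(v_1)\supsetneq R_1(v_2)\supsetneq \cdots \supsetneq R_1(v_k)$, which is exactly the kind of linear nesting we need to produce a path of blocks.

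The main obstacle will be the last step: verifying that the induced subgraph on this chain (together with the relevant arcs of the two Schnyder paths $P_2(v_i),P_3(v_i)$ sandwiched between $v_i$ and $v_{i+1}$) is an \emph{induced} bracelet. Outerplanarity follows from the fact that all chosen vertices sit on the boundary of a nested family of regions, so they can be laid out along a common outer face. That each block contains at most two cut-vertices, and each cut-vertex is shared by exactly two blocks, will come from the nesting structure: the pieces contributed by consecutive pairs $(v_i,v_{i+1})$ are glued only at $v_i$'s, and by 3-connectivity of $G$ together with the Schnyder-wood axioms one rules out chords jumping across non-consecutive $v_i$'s (such chords would force extra outgoing edges in the wrong color at some vertex, contradicting the Schnyder property). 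This structural verification, rather than the counting, is where the real work lies; the $\sqrt[3]{n}$ bound is the exponent dictated precisely by the $3$-dimensionality of the Schnyder order, and any strengthening of this exponent would require a genuinely new combinatorial input beyond Dilworth.
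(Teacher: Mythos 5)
First, a point of comparison: the paper does \emph{not} prove this statement. Theorem~\ref{dglm} is imported from \cite{GLM}, and the surrounding text only records that the proof there ``uses the existence of Schnyder woods to define some partial orders, followed by an application of Dilworth theorem''. So there is no in-paper proof to check you against; I can only assess your reconstruction on its own terms. Your outline does match the advertised method, and the counting step is sound: the three Schnyder region orders have the property that any two vertices are comparable in at least one of them, so the map $v\mapsto(f_1(v),f_2(v),f_3(v))$ (longest chain ending at $v$ in each order) is injective, which forces a chain of size at least $n^{1/3}$ in one of the orders.

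As a proof, however, the write-up has two genuine gaps. (1) The reduction to triangulations is not safe for an \emph{induced} statement. If you triangulate $G$ to $T$ and exhibit a vertex set $S$ such that $T[S]$ is a bracelet, then $G[S]$ is a (spanning) subgraph of $T[S]$, hence still outerplanar, but it may be disconnected or lose the path-of-blocks structure, because edges of $T[S]$ --- in particular Schnyder tree edges of $T$ --- need not be edges of $G$. Your parenthetical claim that the edges you use ``already lie on the outer boundary of nested Schnyder regions'' and therefore ``belong to $G$'' is precisely what needs proof and is false for an arbitrary triangulation of $G$; the standard way around this is to use Felsner's generalization of Schnyder woods, which applies to $3$-connected planar graphs directly. (2) The heart of the matter --- that the chain $v_1<_1\cdots<_1 v_k$, completed by arcs of the paths $P_2(v_i)$ and $P_3(v_i)$, induces a bracelet --- is explicitly deferred (``this is where the real work lies''). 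The chain alone need not induce a connected graph; once you add connecting vertices you must re-establish both that no chords appear between non-consecutive pieces and that the added vertices do not create blocks with three or more cut-vertices, and the remark that a bad chord ``would force extra outgoing edges in the wrong color'' is an assertion, not an argument. In short, you have a plausible high-level plan consistent with the method attributed to \cite{GLM}, but not a proof.
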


Using our Theorem \ref{2coout}, we can prove the following bound for a
bracelet with $n$ vertices.

\begin{lemma}\label{bra}
If $G$ is a bracelet containing $n$ vertices, then it contains an
induced path of size $\frac{1}{2}(\log n-\log \log n)=(\frac{1}{2}-o(1)) \log n$.
\end{lemma}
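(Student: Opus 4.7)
The plan is to exploit the block-path structure of a bracelet and split into two cases according to whether some block is large. Let $B_1,\ldots,B_k$ be the blocks ordered along the block-tree (which is a path by the bracelet condition), and for $1\le i\le k-1$ let $v_i$ denote the unique cut-vertex shared by $B_i$ and $B_{i+1}$. Because each cut-vertex lies in exactly two blocks, we have $B_i\cap B_j=\emptyset$ whenever $|i-j|\ge 2$, and each vertex of $G$ lies in exactly one block except for the $k-1$ cut-vertices, giving the useful identity $\sum_{i=1}^k |V(B_i)|=n+k-1$.

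First I would handle the case where some block satisfies $|V(B_i)|\ge n/\log n$. Every block of $G$ is either a 2-connected outerplanar graph or a bridge, and the former case covers us for $n$ large: Theorem~\ref{2coout} applied to $B_i$ produces an induced path of $B_i$, hence of $G$, of size at least $\tfrac{1}{2}\log(n/\log n)=\tfrac{1}{2}(\log n-\log\log n)$, as desired.

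Otherwise every block satisfies $|V(B_i)|<n/\log n$, so the identity above yields $n+k-1<kn/\log n$, which for $n$ large forces $k\ge \log n-O(1)$. In this regime I would build an induced path that crosses every block: for each interior index $i$ with $2\le i\le k-1$ pick a shortest, hence induced, path $Q_i$ in $B_i$ from $v_{i-1}$ to $v_i$, and in the two extremal blocks pick induced paths $Q_1$ in $B_1$ ending at $v_1$ and $Q_k$ in $B_k$ starting at $v_{k-1}$, each with at least two vertices. Because each $Q_i$ is induced in $B_i$, consecutive paths $Q_i$ and $Q_{i+1}$ meet only at the cut-vertex $v_i$, and edges of $G$ never join vertices of distinct blocks, the concatenation $Q_1Q_2\cdots Q_k$ is an induced path of $G$. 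After discounting the $k-1$ shared cut-vertices it has at least $k+1\ge\log n-O(1)$ vertices, comfortably exceeding $\tfrac{1}{2}(\log n-\log\log n)$.

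The delicate step is the verification that the concatenated path is truly induced: in a general outerplanar graph a cut-vertex could lie in many blocks and create chords between different $Q_i$'s. The bracelet hypothesis is tailored precisely to forbid this, by forcing the block-tree to be a path and each cut-vertex to lie in exactly two blocks; the rest of the argument is just bookkeeping around the two cases.
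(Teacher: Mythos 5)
Your proof is correct and follows essentially the same two-case strategy as the paper's: if some block is large (equivalently, if the number of blocks is at most $\log n$), apply Theorem~\ref{2coout} to that block; otherwise concatenate shortest paths between consecutive cut-vertices along the block path. Your extra care in verifying that the concatenation is induced (edges of $G$ lie within single blocks, and consecutive blocks meet only in a cut-vertex) is a detail the paper leaves implicit, but the argument is the same.
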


\begin{proof}
Denote by $k$ the number of blocks of $G$.

If $k \leq \log n$, then there is a block with at least $\frac{n}{\log
n}$ vertices.  It follows from Theorem \ref{2coout} that in this block
we can find an induced path with $\frac{1}{2} \log (\frac{n}{\log n})
= \frac{1}{2}(\log n-\log \log n)$ vertices.

If $k > \log n$, then $G$ has at least $k-1 \geq \log n -1$
cut-vertices.  Since $G$ is a bracelet, there are $k$ blocks $B_1,
\dots B_k$, with $B_i$ sharing a cut-vertex $c_i$ with $B_{i+1}$.  In
each block $B_i$, we take a shortest path (which is induced) from
$c_{i-1}$ to $c_i$; then the union of these paths is an induced path
of length at least $\log n$.
\end{proof}

Using Theorem~\ref{dglm}, Di Giacomo et al.~proved that a
$3$-connected planar graph with $n$ vertices contains an induced path
of size $\Omega(\frac{\log n}{\log \log n})$.  Combining Theorem
\ref{dglm} with Lemma~\ref{bra}, we obtain the following theorem:

\begin{theorem}\label{planar}
If $G$ is a $3$-connected planar graph with $n$ vertices, then $G$
contains an induced path of size $\tfrac12(\tfrac13 \log n-\log \log n)=(\frac{1}{6}-o(1)) \log n$.
\end{theorem}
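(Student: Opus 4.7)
The plan is to combine the two previously stated results in a very direct way: Theorem~\ref{dglm} guarantees a large induced bracelet inside any $3$-connected planar graph, and Lemma~\ref{bra} guarantees a long induced path inside any bracelet. Since a bracelet that is induced in $G$ is itself a graph, any induced path of this bracelet is also an induced path of $G$, so the two bounds chain without loss.

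More precisely, I would apply Theorem~\ref{dglm} to obtain an induced bracelet $B$ of $G$ with $n'\ge \sqrt[3]{n}$ vertices. Then I would apply Lemma~\ref{bra} to $B$, which yields an induced path in $B$ of size at least
\[
\tfrac{1}{2}(\log n' - \log\log n') \;\ge\; \tfrac{1}{2}\bigl(\tfrac{1}{3}\log n - \log\bigl(\tfrac{1}{3}\log n\bigr)\bigr) \;=\; \tfrac{1}{2}\bigl(\tfrac{1}{3}\log n - \log\log n + \log 3\bigr).
\]
Since $B$ is an induced subgraph of $G$, this induced path is induced in $G$ as well, and the additive constant $\tfrac{1}{2}\log 3$ can be absorbed in the $o(1)$ term, giving the stated bound $\bigl(\tfrac{1}{6}-o(1)\bigr)\log n$.

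There is essentially no obstacle here: both the monotonicity of $\tfrac12(\log x - \log\log x)$ in $x$ and the preservation of induced paths when passing to induced subgraphs are automatic. The only thing worth being careful about is the direction of the inequality when substituting $n'\ge\sqrt[3]{n}$ into Lemma~\ref{bra}, which is fine because the lemma's bound is non-decreasing in $n'$ for $n'$ large enough. All the work lies in the two results we are invoking, and the present theorem is just the corollary obtained by composing them.
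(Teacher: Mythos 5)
Your proposal is correct and is essentially identical to the paper's own proof: apply Theorem~\ref{dglm} to get an induced bracelet on $m\ge\sqrt[3]{n}$ vertices, then apply Lemma~\ref{bra} to it and use that induced paths in induced subgraphs remain induced. The monotonicity check and the observation that the $\log 3$ term only helps are exactly the (implicit) content of the paper's one-line computation.
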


\begin{proof}
Let $G$ be a 3-connected planar graph.  By Theorem \ref{dglm}, it
contains an induced bracelet $H$ with $m \geq \sqrt[3]{n}$ vertices.
By Lemma \ref{bra}, $H$ (and then $G$) contains an induced path of size  $\frac{1}{2}(\log m-\log \log m)>\tfrac12(\tfrac13 \log n-\log \log n)$.
\end{proof}

Our bound is (asymptotically) optimal up to a constant multiplicative factor, as shown
by the family of graphs $G_i$ depicted in Figure \ref{tri}.  The graph
$G_0$ is a triangle, and we obtain $G_i$ from $G_{i-1}$ by adding a
vertex adjacent to each triangle of $G_{i-1}$ that is not in
$G_{i-2}$.  The graph $G_i$ has $n=3+\frac{3^i-1}{2}$ vertices and its
longest induced path contains $\ell=i+1\geq \frac{\log n}{\log 3}$
vertices.

\begin{figure}[h]
   \begin{center}
   \begin{tikzpicture}[scale=0.3]
     \begin{scope}
\node[sommetn] (1) at (90:4) {};
\node[sommetn] (2) at (210:4) {};
\node[sommetn] (3) at (330:4) {};
\draw (1) -- (2) -- (3) -- (1);
\node at (270:4) {$G_0$};
\end{scope}
   \begin{scope}[xshift=250]
\node[sommetn] (1) at (90:4) {};
\node[sommetn] (2) at (210:4) {};
\node[sommetn] (3) at (330:4) {};
\node[sommetn] (4) at (0,0) {};
\draw (1) -- (2) -- (3) -- (1);
\draw (1) -- (4) -- (2);
\draw (3) -- (4);
\draw (1) --(4);
\node at (270:4) {$G_1$};
\end{scope}
   \begin{scope}[xshift=500]
\node[sommetn] (1) at (90:4) {};
\node[sommetn] (2) at (210:4) {};
\node[sommetn] (3) at (330:4) {};
\node[sommetn] (4) at (0,0) {};
\node[sommetn] (11) at (30:1.2) {};
\node[sommetn] (22) at (150:1.2) {};
\node[sommetn] (33) at (270:1.2) {};
\draw (1) -- (2) -- (3) -- (1);
\draw (1) -- (4) -- (2);
\draw (3) -- (4);
\draw (1) -- (11) -- (4) -- (22) -- (2);
\draw (1) -- (22);
\draw (2) --  (33) --  (3) -- (11);
\draw (4) -- (33);
\draw (1) --(4) -- (33);
\node at (270:4) {$G_2$};
\end{scope}
   \end{tikzpicture}
   \end{center}
   \caption{A family of triangulations}
   \label{tri}
\end{figure}
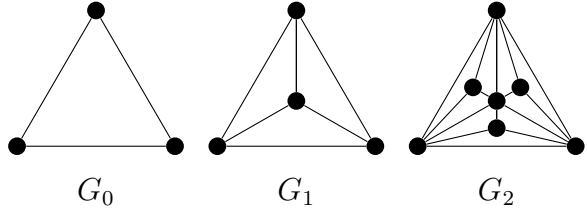

 From our bound for $3$-connected planar graphs, we will now deduce
a bound for $2$-connected planar graphs.  Since such graphs do not
necessarily contain long paths (see for example the complete bipartite
graph $K_{2,n}$), we restrict ourselves to 2-connected planar graphs
with long paths.  We will use the so-called \emph{SPQR-trees}
\cite{BT}, defined as follows.

Let $G$ be a $2$-connected graph.  One can represent the interaction
of $3$-connected induced subgraphs of $G$ by a tree $T_G$, in which
each node is associated to a subgraph and has one of four types:
\begin{itemize}
\item
Each node of type S is associated with a cycle on at least three
vertices;
\item
Each node of type R is associated with a $3$-connected simple
subgraph;
\item
Each node of type P is associated with two vertices, with three or
more edges between them (and two nodes of type P are not adjacent in
$T_G$);
\item
Each node of type Q is associated with a single edge.  This case is
used only when the graph has only one edge.
\end{itemize}
If $x$ and $y$ are two adjacent nodes of $T_G$, and $G_x$ and $G_y$
are the associated graphs, then the edge $xy$ of $T_G$ is associated
to one pair of adjacent vertices in $G_x$, and one pair of adjacent
vertices in $G_y$ (the edges between these pairs of vertices are
called \emph{virtual edges}).  Given an SPQR-tree $T$, we obtain the
corresponding $2$-connected graph as follows.  For each edge $xy$ in
$T_G$, we do the following: let $(a,b)$ be the pair of adjacent
vertices associated to $xy$ in $G_x$, and let $(c,d)$ be the pair of
adjacent vertices associated to $xy$ in $G_y$, then we identify the
vertices $a$ and $c$, and the vertices $b$ and $d$, and remove the
virtual edges between the two newly created vertices (see
Figure~\ref{tri}).  For any $2$-connected graph $G$ the SPQR-tree
$T_G$ is unique up to isomorphism.

Given a subtree $T'$ of $T_G$, we can define an induced subgraph
$G_{T'}$ of $G'$ as described above by identifying vertices and then
removing all virtual edges (including those that are not matched).

\begin{figure}[h]
   \begin{center}
   \begin{tikzpicture}[scale=1.7]
     \begin{scope}[rotate=90]
\node[sommetn] (1) at (0,0) {};
\node[sommetn] (2) at (0,-1) {};
\node[sommetn] (3) at (1,0) {};
\node[sommetn] (4) at (1,-1) {};
\node[sommetn] (5) at (60:1) {};
\node[sommetn] (6) at (30:0.58) {};
\draw (1) -- (2) -- (4) -- (3) -- (1);
\draw (1) -- (5) -- (3);
\draw (1) -- (6) -- (5);
\draw (6) -- (3);
\end{scope}

   \begin{scope}[xshift=100, rotate=90]]
\node[sommetn] (1) at (0,0) {};
\node[sommetn] (3) at (1,0) {};
\node[sommetn] (5) at (60:1) {};
\node[sommetn] (6) at (30:0.58) {};
\node[minimum width=2.5cm, minimum height=3cm,
rectangle,rounded corners=10pt,draw] at (0.5,0.4) {};
\node at (1.5,0.4) {R};
\draw (1) -- (5) -- (3);
\draw (1) -- (6) -- (5);
\draw (6) -- (3);
\draw[dashed] (1) -- (3);
\node[sommetn] (1') at (0,-0.8) {};
\node[sommetn] (3') at (1,-0.8) {};
\draw (1')--(3');
\node[minimum width=1.3cm, minimum height=3cm,
rectangle,rounded corners=10pt,draw] at (0.5,-0.8) {};
\node at (1.5,-0.8) {P};
\draw[dashed] (1') to[bend left] (3');
\draw[dashed] (1') to[bend right] (3');
\node[sommetn] (1'') at (0,-1.6) {};
\node[sommetn] (2) at (0,-2.6) {};
\node[sommetn] (3'') at (1,-1.6) {};
\node[sommetn] (4) at (1,-2.6) {};
\draw (1'') -- (2) -- (4) -- (3'');
\draw[dashed] (1'') -- (3'');
\node[minimum width=2.5cm, minimum height=3cm,
rectangle,rounded corners=10pt,draw] at (0.5,-2.1) {};
\node at (1.5,-2.1) {S};
\node (a) at (0.5,-0.2) {};
\node (b) at (0.5,-0.5) {};
\node (c) at (0.5,-1.1) {};
\node (d) at (0.5,-1.5) {};
\draw[very thick] (a) -- (b);
\draw[very thick] (c) -- (d);
\end{scope}

   \end{tikzpicture}
   \end{center}
   \caption{A graph and the associated SPQR-tree}
   \label{tri}
\end{figure}
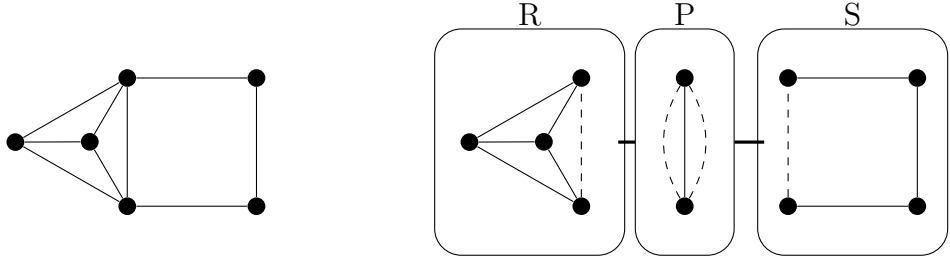

\begin{theorem}\label{planar2}
If $G$ is a $2$-connected planar graph containing a path with $n$
vertices, then $G$ contains an induced path of size at least $\frac{\sqrt{\log
n}}{2\sqrt{6}}-\tfrac14 \log \log n-1=\frac{\sqrt{\log
n}}{2\sqrt{6}}(1-o(1))$.
\end{theorem}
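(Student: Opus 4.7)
The plan is to exploit the SPQR-tree $T_G$ of $G$ in order to reduce to the $3$-connected case handled by Theorem~\ref{planar}. Let $P$ be an $n$-vertex path in $G$, and fix a threshold $m$ (to be optimized at the end at roughly $\log m = \Theta(\sqrt{\log n})$).

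In Case~1, some R-node $x^\ast$ of $T_G$ satisfies $|V(G_{x^\ast})|\ge m$. Theorem~\ref{planar} gives an induced path $Q$ in $G_{x^\ast}$ of size at least $(\tfrac{1}{6}-o(1))\log m$. The obstacle is that $Q$ may use virtual edges of $G_{x^\ast}$, which are not edges of $G$. For each such virtual edge $uv$ on $Q$ I would replace $uv$ by a shortest $u$--$v$ path $R_{uv}$ taken through the subtree of $T_G$ hanging off $x^\ast$ at the separating pair $\{u,v\}$; by $2$-connectivity of $G$ such a path exists, and being shortest it is induced. Because distinct virtual edges of $G_{x^\ast}$ correspond to pairwise disjoint subtrees of $T_G\setminus x^\ast$, the replacement paths $R_{uv}$ are pairwise internally vertex-disjoint, and no $G$-edge runs between their internal vertices, or from them to $V(Q)$, except at the designated endpoints. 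Stringing them into $Q$ produces an induced path of $G$ whose length is at least a constant fraction of $|Q|$.

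In Case~2, every R-node $x$ satisfies $|V(G_x)|<m$, and analogously every S-node cycle has length less than $m$ (a cycle of length $\ge m$ already yields an induced path of length $\lfloor m/2\rfloor\gg\sqrt{\log n}$, handled directly). Each $G_x$ then contributes fewer than $m$ vertices to $P$, so at least $\Omega(n/m)$ tree-nodes of $T_G$ are needed to cover $P$. Since every R- or S-node has degree in $T_G$ bounded by the size of its associated graph (and P-nodes, which by definition are never adjacent to other P-nodes, can be handled by merging with their neighbors), $T_G$ contains a path $x_1,\dots,x_\ell$ of length $\ell=\Omega(\log(n/m)/\log m)$. Consecutive tree-nodes along this path share a separating pair $\{a_i,b_i\}$; picking one vertex $v_i$ from each pair and connecting consecutive $v_i,v_{i+1}$ by a shortest (induced) path inside $G_{x_{i+1}}$ yields an induced $G$-path of size $\Omega(\ell)$, using that distinct $G_{x_i}$ share only separating-pair vertices and so cannot create cross-chords.

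Choosing $\log m\approx\sqrt{6\log n}$ balances the two cases so that each gives an induced path of size $\frac{\sqrt{\log n}}{2\sqrt{6}}(1-o(1))$, with the correction $-\tfrac14\log\log n-1$ arising from the $-\tfrac12\log\log n$ term inside Theorem~\ref{planar} combined with the factor-of-$2$ loss absorbed in Case~1. The main obstacle is Case~1: verifying that the virtual-edge replacement does not create chords in the final path. This relies crucially on the structural property of the SPQR-tree that different tree-edges incident to $x^\ast$ correspond to subtrees meeting $V(G_{x^\ast})$ only at their respective separating pairs, and that edges of $G$ between two vertices both lying in $V(G_{x^\ast})$ are exactly the non-virtual edges of $G_{x^\ast}$.
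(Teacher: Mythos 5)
Your overall strategy is the same as the paper's: a dichotomy over the SPQR-tree, with large R-nodes handled via Theorem~\ref{planar} plus virtual-edge replacement, and the ``all nodes small'' case handled by finding a long path in the tree and threading shortest paths through consecutive separating pairs. Case~1 of your argument is essentially the paper's argument and is fine. But Case~2 has a genuine gap: the P-nodes. To deduce a long path in $T_G$ from the fact that $T_G$ has many nodes, you need an upper bound on the maximum degree of $T_G$, and while R- and S-nodes have degree at most $3m$ by planarity, a P-node can have arbitrarily large degree even when $G$ contains a Hamiltonian path (attach many internally disjoint $u$--$v$ paths at a single separating pair $\{u,v\}$ of a long path; the resulting P-node has unbounded degree). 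Your parenthetical fix --- ``P-nodes \dots can be handled by merging with their neighbors'' --- does not address this: merging a degree-$D$ P-node into a neighbor still leaves a tree node of degree about $D$, so the tree can be star-like and need not contain a path of length $\Omega(\log(n/m)/\log m)$. The paper's resolution is to first replace $G$ by a \emph{minimal} induced $2$-connected subgraph $G'$ containing $P$; then any P-node of degree at least $4$ would have a pendant component disjoint from $P$ (since deleting the two pole vertices splits $P$ into at most three pieces), and deleting that component contradicts minimality. Hence in $T_{G'}$ every P-node has degree at most $3$, which is what makes the diameter count go through. Without this pruning step your Case~2 does not close.

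Two smaller points. First, your choice $\log m\approx\sqrt{6\log n}$ does not actually balance the cases: with the extra factor $\tfrac14$ lost in Case~2 (half from P-nodes, half from choosing a consistent vertex out of each separating pair --- a step you gloss over but which the paper spells out via a renaming argument), Case~2 then yields only about $\frac{\sqrt{\log n}}{4\sqrt 6}$; the correct threshold is $\log m=\sqrt{3\log n/2}$, which equalizes $\tfrac16\log m$ and $\tfrac{\log n}{4\log m}$ at $\frac{\sqrt{\log n}}{2\sqrt 6}$. Second, in Case~1 the virtual-edge replacement only lengthens the path, so you get size at least $|Q|$, not merely a constant fraction of it; no factor of $2$ is lost there.
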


\begin{proof}
Let $P$ be a path on $n$ vertices in $G$.  We consider the smallest
induced subgraph $G'$ of $G$ which contains $P$ and is $2$-connected.
Let $T_{G'}$ be the SPQR-tree corresponding to $G'$.

Let $\alpha = 2^{\sqrt{\frac{3 \log n}{2}}}$.  Note that either there
is a node of $T_{G'}$ whose associated graph has size $\alpha$, or
every graph associated with a node of $T_{G'}$ has less than $\alpha$
vertices.

Suppose first that there is a node $x$ of $T_{G'}$ whose associated
graph has size $\alpha$.  Then $x$ is a node of type S or R. If $x$ is
a node of type S, then there is an induced path of size $\alpha-1$ in
the associated graph (which is a cycle).  If $x$ is a node of type R,
then the associated graph is a $3$-connected planar graph, and by
Theorem \ref{planar}, there is an induced path of size $\frac1{6}\log
\alpha-\tfrac12 \log \log \alpha$ in the associated graph.  In both cases, we have
an induced path $P_x$ of size at least $\frac1{6}\log
\alpha-\tfrac12 \log \log \alpha$
in the graph associated with the node $x$.  If $P_x$ is also an
induced path in $G$ we are done, so assume the contrary, which means
that $P_x$ contains some virtual edges.  Let $ab$ be any virtual edge
in $P_x$.  Then $ab$ corresponds to a virtual edge $cd$ in some node
$y$ adjacent to $x$ in $T_{G'}$.  Let $P_{ab}$ be a shortest path from
$c$ to $d$ in the subgraph $G_{T_y}$, where $T_y$ is the subtree of
$T_{G'}$ rooted at $y$ ($T_y$ contains the descendance of $y$, viewing
$x$ as the root of $T_{G'}$).  In $P_x$ we replace each virtual edge
$ab$ with the corresponding path $P_{ab}$, so we denote by $P'$ the
resulting path of $G'$.  Observe that this path $P'$ is an induced
path in $G'$, because all the replacement paths are in distinct
subtrees of $T_{G'}$ (each of these subtrees corresponds to a distinct
neighbor of $x$ in $T_{G'}$).  Hence $P'$ is an induced path in $G'$,
of size at least the size of $P_x$.  So we have an induced path of
size $\frac1{6}\log
\alpha-\tfrac12 \log \log \alpha$ in $G'$, which is an induced path
of size $\frac1{6}\log
\alpha-\tfrac12 \log \log \alpha\ge \frac{\sqrt{\log
n}}{2\sqrt{6}}-\tfrac14 \log \log n-1$ in $G$.

Suppose now that every graph associated with a node of $T_{G'}$ has
less than $\alpha$ vertices.  Then there are at least
$\frac{n}{\alpha}$ nodes in $T_{G'}$.  Since each graph corresponding
to a node of type R or S is a planar graph with no multiple edge, and
has at most $\alpha$ vertices, it contains at most $3 \alpha - 6$
edges.  So the degree of a node of type R or S is at most $3 \alpha -
6$, since each edge contributes to at most $1$ in the degree, if it is
a virtual edge.  Concerning the nodes of type $P$, we claim that their
degree in $T_{G'}$ is at most $3$.  For suppose that there is a node
$X$ of type P of degree at least $4$.  Since the removal of the two
vertices in the graph associated with $X$ disconnects the graph $G'$,
there are edges of the path $P$ in at most three components associated
with nodes adjacent to $X$.  Since these nodes are adjacent to $X$,
they are not of type P, and so they have at least three vertices.
Removing in $G'$ the vertices of those components that do not
intersect $P$, we obtain a smaller graph, induced, $2$-connected, and
containing the path $P$, contradicting the minimality of $G'$.  So the
claim holds.  It follows that the degree of every node in $T_{G'}$ is
at most $3\alpha$.  Let $d$ be the diameter of $T_{G'}$.  Then we have
$\frac{n}{\alpha} \leq (3\alpha)^{(d-2)}$.  So there is a path in
$T_{G'}$ of size $d \geq \frac{\log n}{\log (3\alpha)}-\frac{\log
\alpha}{\log (3 \alpha)}+2 \ge \frac{\log n}{\log (3\alpha)}$.

\medskip

We claim that if there is a path $\mathcal{P}$ of size $\ell$ in
$T_{G'}$, then there is an induced path in $G'$ of size
$\frac{\ell}{4}$.  First, since two nodes of type P cannot be
adjacent, there are at most $\frac{\ell}{2}$ nodes of type P in
$\mathcal{P}$.  The other nodes are of type R or S. Denote by $p_1,
\dots, p_\ell$ the nodes of $\mathcal{P}$ and $e_1, \dots, e_{\ell-1}$
its edges, where $e_i=p_i p_{i+1}$.  Each edge $e_i$ corresponds to
one virtual edge in $p_i$ and one virtual edge in $p_{i+1}$, which
correspond to two vertices $x_i$, $y_i$ (adjacent or not) in the graph
$G'$.  We have $\{x_i, y_i\} = \{x_{i+1}, y_{i+1}\}$ if and only if
the node $p_{i+1}$ of $\mathcal{P}$ is of type P. Denote by $p_{i_1},
\ldots, p_{i_k}$ the nodes that are not of type P. We have $k \geq
\frac{\ell}{2}$, since at most $\frac{\ell}{2}$ nodes have type P. For
each $j$, there is at most one vertex in common between $\{x_{i_j},
y_{i_j}\}$ and $\{x_{i_{j+1}}, y_{i_{j+1}}\}$.  Then we keep the name
of $\{x_{i_1}, y_{i_1}\}$, and rename the others vertices so that if
$\{x_{i_j}, y_{i_j}\}$ and $\{x_{i_{j+1}}, y_{i_{j+1}}\}$ have a
vertex in common, then we have either $x_{i_j} = x_{i_{j+1}}$ or
$y_{i_j} = y_{i_{j+1}}$.  In total, there are at least
$\frac{\ell}{2}$ vertices $x_{i_j}$ and $y_{i_j}$, so one of these two
sets, say the set $\{x_{i_j}\mid 1\le j \le k\}$, contains at least
$\frac{\ell}{4}$ elements.  We can then find an induced path in $G'$
containing these vertices: we consider the induced subgraph of $G$
corresponding to the subtree rooted at $p_{i_{j+1}}$ and containing
$p_{i_{j+1}}$ and its descendance, except $p_{i_j}$, $p_{i_{j+2}}$ and
their descendance, and take a shortest path between $x_{i_j}$ and
$x_{i_{j+1}}$ in this graph.  The path obtained is induced since each
path is taken in subtrees having no vertex in common.  Then, we obtain
an induced path of size $\frac{\log n}{4 \log (3\alpha)}
\ge \frac{\sqrt{\log n}}{2\sqrt{6}}-1$ in $G'$.
\end{proof}

Using Theorem \ref{planar2}, we deduce the following corollary for
connected planar graphs using Lemma \ref{2co}.

\begin{corollary}\label{planarco}
If $G$ is a connected planar graph containing a path with $n$
vertices, then $G$ contains an induced path of size $\frac{\sqrt{\log
n}}{2\sqrt{6}}(1-o(1))$.
\end{corollary}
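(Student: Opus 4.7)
The plan is a direct application of Lemma \ref{2co}. I take $\mathcal{F}$ to be the class of planar graphs; this is hereditary, since vertex deletion preserves planarity. The hypothesis of Lemma \ref{2co} on $2$-connected members of $\mathcal{F}$ is then supplied by Theorem \ref{planar2}, which gives, for any $2$-connected planar graph containing an $n$-vertex path, an induced path of size at least $\tfrac{\sqrt{\log n}}{2\sqrt{6}} - \tfrac14\log\log n - 1$. This matches the required form $\alpha(\log n)^\beta$ with $\alpha = \tfrac{1}{2\sqrt{6}}$ and $\beta = \tfrac12$, up to additive lower-order terms.

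Because Theorem \ref{planar2} does not give a perfectly clean bound of the form $\alpha(\log n)^\beta$, I would absorb the discrepancy as follows: fix any $\varepsilon>0$ and set $\alpha_\varepsilon = \tfrac{1}{2\sqrt{6}} - \varepsilon$. For all sufficiently large $n$, the quantity $\tfrac{\sqrt{\log n}}{2\sqrt{6}} - \tfrac14\log\log n - 1$ exceeds $\alpha_\varepsilon\sqrt{\log n}$. Lemma \ref{2co} therefore applies with parameters $\alpha_\varepsilon$ and $\tfrac12$, producing in any connected planar graph $G$ containing an $n$-vertex path an induced path of size $(\alpha_\varepsilon - o(1))\sqrt{\log n}$. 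Since $\varepsilon>0$ was arbitrary, this yields an induced path of size $\tfrac{\sqrt{\log n}}{2\sqrt{6}}(1-o(1))$, which is exactly the claim of Corollary \ref{planarco}.

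I do not anticipate any genuine obstacle: both ingredients (hereditarity of the planar class, and the $2$-connected bound from Theorem \ref{planar2}) are available at no cost. The only delicate point is the bookkeeping needed to absorb the $\log\log n$ and constant terms of Theorem \ref{planar2} into the $(1-o(1))$ factor of Lemma \ref{2co}; this can be handled either by the $\varepsilon$-argument sketched above, or, equivalently, by inspecting the proof of Lemma \ref{2co} and verifying that it tolerates a hypothesis of the form $\alpha(\log n)^\beta - o\bigl((\log n)^\beta\bigr)$ in place of $\alpha(\log n)^\beta$. In particular, no new graph-theoretic argument is needed.
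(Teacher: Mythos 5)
Your proposal is correct and matches the paper's argument exactly: the paper also deduces Corollary~\ref{planarco} by applying Lemma~\ref{2co} with $\mathcal{F}$ the class of planar graphs and the $2$-connected bound from Theorem~\ref{planar2}. Your extra $\varepsilon$-bookkeeping to absorb the $\log\log n$ and constant terms is a reasonable way to handle a detail the paper leaves implicit.
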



In this paper, a {\em surface} is a non-null compact connected 2-manifold without boundary.  A surface is either orientable or non-orientable. 
The \emph{orientable surface~$\OS_h$ of genus~$h$} is obtained by adding $h\ge0$
\emph{handles} to the sphere, and the \emph{non-orientable surface~$\NOS_k$ of genus~$k$} is formed by adding $k\ge1$ \emph{cross-caps} to the sphere. 
The {\em Euler genus} of a surface $\Sigma$ is defined as twice its genus if $\Sigma$ is orientable, and as its genus if $\Sigma$ is non-orientable. We refer the reader to the monograph of Mohar and
Thomassen~\cite{MoTh} for background on graphs on surfaces.

Using Corollary~\ref{planarco}, we easily deduce a similar bound for graphs
embedded on a fixed surface.

\begin{theorem}\label{surf}
For any surface $\Sigma$, any graph $G$ embedded in $\Sigma$, and
containing a path with $n$ vertices, also contains an induced path of
size $(\frac{1}{6\sqrt{2}}-o(1))\sqrt{\log n}$ (where the $o(1)$
depends on $\Sigma$).
\end{theorem}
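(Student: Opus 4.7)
The plan is to reduce the surface case to the planar case treated in Corollary~\ref{planarco}, by extracting an induced planar subgraph of $G$ that still contains a long sub-path of $P$. The only ingredient beyond what is already in the paper is a standard planarization fact for bounded-genus graphs: there is a function $f$ depending only on $\Sigma$ such that any $n$-vertex graph embedded in $\Sigma$ contains a set $S$ of at most $f(\Sigma)\cdot n^{2/3}$ vertices whose removal yields a planar graph. This can be derived by iterating a balanced-separator theorem for bounded-genus graphs (in the style of Djidjev's extension of Lipton--Tarjan to surfaces); in fact a bound of $O(\sqrt{n})$ is achievable, but the weaker exponent $2/3$ is exactly what is needed to match the target constant $\tfrac{1}{6\sqrt 2}$.

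Given such a set $S$, the path $P$ meets $S$ in at most $|S|$ vertices, so $P\setminus S$ decomposes into at most $|S|+1$ sub-paths of $P$, each entirely contained in the (now planar) graph $G\setminus S$. By pigeonhole, the longest such sub-path $P'$ has size at least $\tfrac{n-|S|}{|S|+1}\ge c(\Sigma)\cdot n^{1/3}$ for $n$ large enough. Let $C$ be the connected component of $G\setminus S$ containing $P'$: then $C$ is a connected planar graph with a path of size at least $c(\Sigma)\cdot n^{1/3}$, and because we only removed vertices, $C$ is an \emph{induced} subgraph of $G$, so any induced path of $C$ is also induced in $G$.

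Applying Corollary~\ref{planarco} to $C$ produces an induced path in $C$, hence in $G$, of size at least
\[\frac{1}{2\sqrt 6}\sqrt{\log\bigl(c(\Sigma)\cdot n^{1/3}\bigr)}\,(1-o(1))=\frac{1}{2\sqrt 6}\sqrt{\tfrac 13\log n}\,(1-o(1))=\frac{1}{6\sqrt 2}\sqrt{\log n}\,(1-o(1)),\]
which is exactly the bound claimed. The constants $c(\Sigma)$ and $f(\Sigma)$ enter only through the $o(1)$ term, which depends on $\Sigma$, consistent with the theorem statement.

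The only obstacle worth flagging is justifying the planarization statement cleanly. A natural alternative would be to induct on the Euler genus, at each step deleting the vertex set of a shortest non-contractible cycle to drop the genus by one; this would require controlling the length of short non-contractible cycles via the edge-width of the embedding and carefully tracking how the constant degrades across the $g$ inductive steps. Invoking the planarizing-set result as a black box short-circuits this bookkeeping and yields the target constant directly, at the cost of importing one external tool; the remainder of the argument is a pigeonhole together with a single application of Corollary~\ref{planarco}.
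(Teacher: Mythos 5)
Your route is genuinely different from the paper's. The paper proves the theorem by induction on the Euler genus: it takes a shortest non-contractible cycle $\mathcal{C}$ of $G$, observes that $\mathcal{C}$ is an \emph{induced} cycle, and splits into two cases --- if $\mathcal{C}$ has at least $\log n$ vertices it already contains an induced path of size $\log n-1$ and we are done; otherwise deleting $\mathcal{C}$ costs only $\log n$ vertices of $P$, hence a factor $(1-\tfrac{\log\log n}{\log n})$ in $\sqrt{\log n}$ per genus-reduction step, and after finitely many steps one lands in the planar case (Corollary~\ref{planarco}). This is entirely self-contained and in fact yields the stronger constant $\tfrac{1}{2\sqrt6}$ appearing inside the paper's proof. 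Your approach instead planarizes in one shot and accepts a polynomial (rather than polylogarithmic) loss in the length of the surviving subpath of $P$, which is harmless under the square root of a logarithm and is calibrated to land exactly on $\tfrac{1}{6\sqrt2}$. Modulo the planarization lemma, your pigeonhole step, the observation that the relevant component is an induced planar subgraph, and the application of Corollary~\ref{planarco} are all correct.

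The gap is in how you justify the planarization lemma. The statement you need (a planarizing set of size $O_\Sigma(\sqrt n)$, let alone $O_\Sigma(n^{2/3})$) is a genuine theorem of Djidjev and Venkatesan, so citing it would close the proof; but neither of the derivations you sketch works as stated. Iterating a balanced-separator theorem does not produce a planarizing set: removing a separator does not decrease the genus of the pieces, and recursing until the pieces are small enough to be automatically planar (at most $4$ vertices) deletes a constant fraction of all vertices. The alternative of repeatedly deleting a shortest non-contractible cycle cannot be controlled ``via the edge-width'': the edge-width of an embedded graph can be linear in $n$ (subdivide every edge of a fixed non-planar triangulation many times), so there is no a priori sublinear bound on the cycles you would delete. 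The missing idea is precisely the paper's: a shortest non-contractible cycle is chordless, so when it is long it is not an obstacle but already the desired long induced path. With that observation the edge-width issue disappears, the genus induction goes through with only a $\log n$ loss per step, and no external tool is needed.
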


\begin{proof}
Let $f_g$ be the function defined as follows: $f_0$ is the $o(1)$
defined in Corollary \ref{planarco}, and for each $g>0$,
$f_g(n)=\tfrac{1}{2\sqrt{6}}-(\tfrac{1}{2\sqrt{6}}-f_{g-1}(\tfrac{n}{\log
n}))(1-\tfrac{\log \log n}{ \log n})$.  It is not difficult to prove
by induction on $g$ that for fixed $g$, $f_g=o(1)$.  We prove by
induction on the Euler genus $g$ of $\Sigma$ that every graph
embeddable in $\Sigma$ with a path $P$ on $n$ vertices has an induced
path on $(\tfrac{1}{2\sqrt{6}}-f_g(n))\sqrt{\log n}$ vertices.

If $g=0$, the result follows from Corollary \ref{planarco}, so assume
that $g>0$.  Let $\mathcal{C}$ be a shortest non-contractible cycle of
$G$.  Note that $\mathcal{C}$ is an induced cycle, therefore, if
$\mathcal{C}$ has size at least $\log n$, then $G$ contains an induced path of
size $\log n-1\ge (\tfrac{1}{2\sqrt{6}}-f_g(n))\sqrt{\log n}$ and we
are done.  Hence, we can assume that $\mathcal{C}$ contains at most
$\log n$ vertices.

The path $P$ and the cycle $\mathcal{C}$ can have at most $\log n$
vertices in common.  Let us denote by $p_1, \dots, p_k$ these common
vertices, in order of appearance in $P$.  Then we have $P =
P_0$-$p_1$-$P_1$-$p_2$-$\cdots $-$p_k$-$P_k$, where each $P_i$ is a
path (possibly empty).  Since $P$ has $n$ vertices, there is one $P_i$
with at least $\frac{n}{\log n}$ vertices.  If we remove
the vertices of $\mathcal{C}$ from $G$, we obtain a graph $G'$ such that each connected
component is embeddable on a surface of Euler genus at most $g-1$, and
at least one such component contains a path on $\frac{n}{\log n}$
vertices.  Then, by induction, $G'$ (and therefore $G$) contains an
induced path of size
\begin{eqnarray*}
& & (\tfrac{1}{2\sqrt{6}}-f_{g-1}(\tfrac{n}{\log n}))\sqrt{\log 
\tfrac{n}{\log n}}\\
&\ge & (\tfrac{1}{2\sqrt{6}}-f_{g-1}(\tfrac{n}{\log n}))
   (1- \tfrac{\log \log n}{\log n}) \sqrt{\log n}\\
&= & (\tfrac{1}{2\sqrt{6}}-f_{g}(n))\sqrt{\log n},
\end{eqnarray*}
as desired.
\end{proof}

We do not know if the bound in Theorem~\ref{planar2} and
Corollary~\ref{planarco} is optimal.  We now construct a family of
planar graphs containing a path with $n$ vertices in which the longest
induced path has size $3\frac{\log n}{\log \log n}$.  Let $G_1$ be the
graph obtained by taking a path $P=p_1$-$ \cdots$-$p_k$ on $k$
vertices and adding two adjacent vertices $u$ and $v$ that are
adjacent to each vertex of the path (see Figure \ref{2copla}).  The
graph $G_1$ has $k+2$ vertices and a Hamiltonian path $u, p_1, \dots,
p_k, v$, unique up to symmetry.  We define $G_{i+1}$ by induction: we
sart with a copy of $G_1$ (called the \emph{original copy} of $G_1$)
and replace each edge $p_j p_{j+1}$ of the path $P$ in $G_1$ by a copy
of $G_i$, identifying $u$ of $G_i$ with $p_j$ of $G_1$ and $v$ of
$G_i$ with $p_{j+1}$ of $G_1$.  The vertices $u$ and $v$ in $G_{i+1}$
are then defined to be the vertices $u$ and $v$ or the original copy
of $G_1$.  We claim that $G_i$ has at least $(k-2)^{i-1}$ vertices, a
Hamiltonian path, and that the longest induced path in $G_i$ has $2i +
(k-2)$ vertices.

\begin{figure}[h]
   \begin{center}
   \begin{tikzpicture}[scale=0.6]
     \begin{scope}
\node[sommetn] (1) at (90:4) {};
\node[sommetn] (2) at (210:4) {};
\node at (210:4.8) {$u$};
\node at (330:4.8) {$v$};
\node[sommetn] (3) at (330:4) {};
\node[sommetn] (4) at (90:1) {};
\node[sommetn] (7) at (90:2) {};
\node[sommetn] (8) at (90:3) {};
\node[sommetn] (6) at (0,0) {};
\node[sommetn] (5) at (270:1) {};
\draw (1) -- (2) -- (3) -- (1);
\draw (1) -- (4) -- (2);
\draw (3) -- (4);
\draw (4) -- (5);
\draw (1) --(4) -- (5);
\draw (2) -- (6) -- (3);
\draw (2) -- (5) -- (3);
\draw (2) -- (7) -- (3);
\draw (2) -- (8) -- (3);
\end{scope}

   \end{tikzpicture}
   \end{center}
   \caption{The graph $G_1$}
   \label{2copla}
\end{figure}
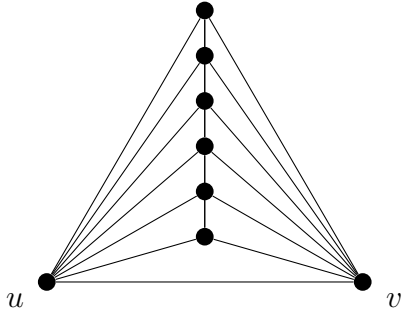

First, note that in $G_i$ the longest induced path starting from $u$
or $v$ has size $i+1$.  This is trivial for $G_1$, and if it is true
for $G_{i-1}$, then in $G_i$, the longest path starting by $u$ (or
$v$) is obtained by taking an edge from $u$ (or $v$) to a vertex of
$P$, and then taking the longest induced path starting by this vertex
in the copy of $G_{i-1}$, which by induction has $i-1$ vertices.

Now, observe that an induced path in $G_i$ consists of an induced path
in some copy of $G_{i-1}$, followed by an induced path in $G_1$,
followed by an induced path in some copy of $G_{i-1}$.  Note that the
two copies might coincide, and if the induced path is not completely
contained in a unique copy of $G_{i-1}$, then it contains some vertex
$u$ or $v$ of the copies of $G_{i-1}$ it intersects.  In any case, any
induced path in $G_i$ contains at most $2i+(k-2)$ vertices.

For $k=i$, we have a $2$-connected planar graph with a path on $n\ge
(k-2)^{k-1}$ vertices and a longest induced path of size $3k-2\le
3\frac{\log n}{\log \log n}$.

We can use a similar construction to find a family of Hamiltonian
chordal graphs of maximum clique size $2t+1$ (and therefore tree-width
$2t$) with $n$ vertices and no longest induced path of length more
than $2t(\log n)^{\frac{1}{t}}$.

We first deal with graphs of tree-width $4$.  We consider the
outerplanar graphs of Figure~\ref{outerplanar}.  We build $G_{1,4}$ by
taking some $G_i$ of Figure~\ref{outerplanar} (a graph with $n$
vertices and a longest induced path of length $2 \log n$), and we add
two adjacent vertices $u,v$ that are adjacent to every vertex of
$G_i$.  The graph $G_{1,4}$ is a $4$-tree and contains a Hamiltonian
path $P$ starting at $u$ and ending at $v$.  Then we obtain
$G_{k+1,4}$ by replacing each edge $a,b$ in the Hamiltonian path $P$
in $G_{1,4}$ by $G_{k,4}$, identifying $a$ with $u$ and $b$ with $v$
(there is still a Hamiltonian path starting by $u$ and ending by $v$
in $G_{k+1,4}$, and the graph has tree-width $4$).  We claim that
$G_{k,4}$ has at least $n^k$ vertices and a longest induced path of
length at most $2(\log n+k-1)$.

First, note that in $G_{k,4}$ the longest induced path starting from
$u$ or $v$ has size $k+1$.  Then observe as above that induced paths
in $G_{k,4}$ are the concatenation of an induced path in a copy of
$G_{k-1,4}$, an induced path in $G_{1,4}$, and an induced path in a
copy of $G_{k-1,4}$.  As before, if the induced path of $G_{k,4}$ is
not contained in a copy of $G_{k-1,4}$, then it contains a vertex $u$
or $v$ of each of the at most copies of $G_{k-1,4}$ it intersects.
Again, we conclude that any induced path in $G_{k,4}$ has size at most
$2(\log n +k-1)$.

For $k=\log n$, we obtain a graph with $N\ge n^{\log n}$ vertices,
with a longest induced path of length at most $4 \log n \le 4 (\log
N)^{\frac{1}{2}}$.

If we have a family of Hamiltonian graphs of tree-width $2t$ with $N$
vertices and a longest induced path of length $(\log
N)^{\frac{1}{t}}$, then we can build the family for tree-width
$2(t+1)$.  We take a Hamiltonian graph $G$ of tree-width $2t$ with $n$
vertices and a longest induced path of length $(\log
n)^{\frac{1}{t}}$, and we add two adjacent vertices $u,v$ that are
adjacent to every vertex of $G$: denote by $G_{1,2(t+1)}$ this graph.
Then we obtain $G_{k+1,2(t+1)}$ by replacing each edge $ab$ in the
Hamiltonian path in $G_{1,2(t+1)}$ by a copy of $G_{k,2(t+1)}$,
identifying $a$ with $u$ and $b$ with $v$, which gives a graph of
tree-width $2(t+1)$.

Similarly, $G_{k,2(t+1)}$ has $N\ge n^k$ vertices, and a longest path
in $G_{k,2(t+1)}$ has size at most $2(k+t(\log n)^{\frac{1}{t+1}})$.
For $k=(\log n)^{\frac{1}{t+1}}$, we have $N \ge n^{(\log
n)^{\frac{1}{t+1}}}$ vertices and the longest path has size at most
$2(t+1)(\log N)^\frac{1}{t+1}$.

\section{Induced paths in interval graphs}

An important class of chordal graphs is the class of interval graphs.
An \emph{interval graph} is the intersection graph of a family of
intervals on the real line.  We will use the following notation.  Let
$G$ be an interval graph.  For every vertex $v\in V(G)$, let
$I(v)=[l(v),r(v)]$ be the corresponding interval in an interval
representation of $G$.  We may assume without loss of generality that
the real numbers $l(v),r(v)$ ($v\in V(G)$) are all different.  We call
\emph{left ordering} the ordering $v_1<\cdots<v_n$ of the vertices of
$G$ such that $v< w$ if and only if $l(v) < l(w)$.  It is easy to see
that for each $i$ the vertex $v_i$ is a simplicial vertex in the
subgraph induced by $v_1, \dots, v_i$.

We now prove that interval graphs satisfy Conjecture~\ref{conj:1}.

\begin{theorem}\label{thm:int}
For any integer $k$, there is a constant $c_k>0$ such that if $G$ is
an interval graph with $n$ vertices containing a Hamiltonian path and
$G$ has maximum clique size $k$, then $G$ has an induced path of size
at least $c_k (\log n)^{\frac{1}{(k-1)^2}}$.
\end{theorem}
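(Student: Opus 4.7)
The plan is to argue by induction on $k$. For the base case $k=2$, an interval graph with maximum clique size $2$ is triangle-free and chordal, hence a disjoint union of paths; combined with the existence of a Hamiltonian path, this forces $G$ itself to be a path on $n$ vertices, so the desired induced path of size $\Omega(\log n)$ is trivially present.

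For $k \ge 3$, let $G$ be an interval graph on $n$ vertices with maximum clique size $k$ and Hamiltonian path $H = p_1 p_2 \cdots p_n$. Suppose for contradiction that $G$ contains no induced path of size $L := c_k (\log n)^{1/(k-1)^2}$. The plan is to extract an induced subgraph $G'$ of $G$ that has maximum clique size at most $k-1$ and that still contains a path on $N$ vertices, where $N$ is large enough that applying the induction hypothesis to $G'$ yields an induced path of size $L$, contradicting the assumption. Using the identity $(k-1)^2 = (k-2)^2 + (2k-3)$, the required size turns out to be roughly $N \ge n^{1/L^{2k-3}}$, so the subgraph $G'$ may be dramatically smaller than $G$ as long as its maximum clique drops by one.

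To construct $G'$, I would use a sweep-line argument on the interval representation. The basic tool is: for any point $p$ on the real line, the set $K(p)$ of intervals containing $p$ is a clique of size at most $k$ that separates intervals lying strictly to the left of $p$ from those lying strictly to the right. Since $H$ can cross this separator only through $K(p)$, the path $H$ decomposes into at most $|K(p)|+1 \le k+1$ maximal runs of vertices lying entirely on one side of $p$, so one side carries a subpath of size $\Omega(n/k)$. To actually decrease the maximum clique, I would iterate this cut across the $k$ "layers" of depth in the interval representation, each time choosing a nested sweep point that forces us to drop vertices in the deepest layer; after one full sweep around all max cliques of size $k$, the remaining induced subgraph has maximum clique at most $k-1$.

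The main obstacle is the quantitative control. A naive iteration of the separator argument would kill a factor of $\Omega(n)$ per peeled layer and destroy the induction. The crucial leverage is the contradiction hypothesis: whenever the Hamiltonian path oscillates many times between different depth layers, those oscillations can be converted into a long induced path, in the same spirit as the $k$ vertex-disjoint induced paths built in the proof of Theorem~\ref{ktree}. Balancing "few oscillations" (which lets us keep a long subpath when peeling a layer) against "many oscillations" (which directly yields an induced path of size $L$) is where the exponent $(k-1)^2$ arises: the losses accumulate quadratically across the $k-1$ layers that must be peeled to reach the base case. Making this balance quantitatively tight is the delicate part of the proof.
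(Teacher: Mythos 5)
Your overall architecture -- induct on $k$ by passing to an induced subgraph $G'$ of clique number $k-1$ that still carries a path on roughly $n^{1/L^{2k-3}}$ vertices -- is not what the paper does, and the one step that would make it work is precisely the step you leave unproved. Concretely: to drop the maximum clique size of the whole graph you must delete a vertex from every maximum clique, and these can be spread densely along the interval representation; deleting $m$ vertices cuts the Hamiltonian path into up to $m+1$ pieces, so if the ``deepest layer'' contains a constant fraction of the vertices you retain only a path of size $O(k)$, not $n^{1/L^{2k-3}}$. Your proposed rescue -- that many oscillations of the Hamiltonian path between depth layers can be ``converted'' into a long induced path as in Theorem~\ref{ktree} -- has no mechanism behind it here: the construction in Theorem~\ref{ktree} builds induced paths from the tree $T(G)$ of a $k$-tree, not from oscillations of a Hamiltonian path, and in an interval graph the Hamiltonian path can weave between layers arbitrarily often while every induced path stays short. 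So the dichotomy ``few oscillations vs.\ many oscillations'' is asserted, not established, and it is the entire content of the induction step.

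For comparison, the paper never reduces the clique number of $G$ while preserving a long path. It runs three successive reductions: first (Lemma~\ref{lem:int1}) a double induction on $n$ and $k$, using the separator formed by the intervals covering the left endpoint of the last neighbor of $v_n$, extracts an induced subgraph of size only $\Theta(\log n)$ in which consecutive vertices of the left ordering are adjacent -- all of the dramatic size loss is absorbed here, and the clique number is \emph{not} required to drop at each step (when it does not drop, the output gains one vertex, which is why the bound is logarithmic rather than polynomial). The last two lemmas then work inside this logarithmic-size structure with only polynomial losses: one removes simplicial vertices while preserving an $N^{1/(k-1)}$ fraction, and one builds an induced path of size $N^{1/(k-1)}$ from a carefully chosen minimal stable set plus connecting vertices. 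The exponent $(k-1)^2$ thus arises from composing two independent $(k-1)$-th-root losses applied to a set of logarithmic size, not from a quadratic accumulation across peeled layers. Your quantitative bookkeeping (the identity $(k-1)^2=(k-2)^2+(2k-3)$ and the target $N\ge n^{1/L^{2k-3}}$) is internally consistent, but without a proof of the existence of $G'$ the argument does not go through.
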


The proof of this theorem is divided into three lemmas.

\begin{lemma}\label{lem:int1}
Let $G$ be an interval graph with $n$ vertices, containing a
Hamiltonian path.  Let $k\ge 2$ be the maximum clique size in $G$ and
let $v_1<\cdots<v_n$ be a left ordering of the vertices of $G$.  Then
$G$ contains an induced subgraph $H$ of size at least $f_1(n,k)=
\log_{k+2}(\tfrac{n}{(k+2)!})$ containing $v_n$ where, in the
induced left ordering, each vertex is adjacent to its successor.
\end{lemma}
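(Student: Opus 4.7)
I plan to proceed by strong induction on $n$. The base case $n \leq (k+2)!$ gives $f_1(n, k) \leq 0$, so $H = \{v_n\}$ trivially suffices.

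For the inductive step with $n > (k+2)!$, I examine the neighborhood $N(v_n) = \{u_1 < u_2 < \cdots < u_p\}$ of $v_n$ in left order. Since $v_n$ is simplicial in $G$ (a standard property of the left ordering) and $\omega(G) = k$, we have $p \leq k-1$; Hamiltonicity gives $p \geq 1$. The $u_i$'s partition the remaining vertices into $p+1$ blocks: $B_0 = \{v : l(v) < l(u_1)\}$ to the left of $u_1$, $B_i = \{v : l(u_i) < l(v) < l(u_{i+1})\}$ between $u_i$ and $u_{i+1}$ for $1 \leq i < p$, and $B_p = \{v : l(u_p) < l(v) < l(v_n)\}$ between $u_p$ and $v_n$. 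By pigeonhole applied to these $p+1 \leq k$ blocks whose sizes sum to $n - 1 - p$, either some prefix $G_{u_i} := G[\{v : l(v) \leq l(u_i)\}]$ has at least $n/(k+2)$ vertices, or $|B_p| \geq n/(k+2)$.

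In the first case, the key auxiliary step is to establish that every left-ordered prefix of a Hamiltonian interval graph is itself Hamiltonian; I would prove this by iteratively removing the rightmost vertex $v_{m+1}$ and rerouting the Hamiltonian path through its (clique) neighborhood, exploiting simpliciality. The induction hypothesis applied to $G_{u_i}$ then gives a chain $H'$ ending at $u_i$ of size at least $f_1(n/(k+2), k) = f_1(n,k) - 1$, and appending $v_n$ via the edge $u_i v_n$ produces the desired chain of size $f_1(n, k)$.

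In the second case, $u_p$ is universal over $B_p$ (every $v \in B_p$ has $l(v) \in (l(u_p), r(u_p))$ since $r(u_p) \geq l(v_n) > l(v)$), so $\omega(G[B_p]) \leq k-1$. I would invoke a secondary induction on $k$, applied to a subgraph combining $B_p$ with $u_p$ and $v_n$, carefully threading the recovered chain through the clique $N(v_n) \cup \{v_n\}$ to keep $v_n$ at the chain's endpoint; the arithmetic closes because dropping the logarithm base from $k+2$ to $k+1$ exactly matches the change in the factorial shift from $(k+2)!$ to $(k+1)!$. The main obstacle is the prefix-Hamiltonicity sublemma in case one, which requires careful rerouting and may rely on the clique-path structure of interval graphs, together with the concatenation bookkeeping in case two to ensure $v_n$ remains the endpoint of the chain after the nested recursion on $k$.
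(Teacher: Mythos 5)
Your decomposition is essentially the paper's: you pivot on the last left-endpoint neighbour of $v_n$ (your $u_p$ is the paper's $v_i$), and split into ``the part right of $u_p$ is large'' (recurse with clique number $k-1$) versus ``the part left of it is large'' (recurse with the same $k$ on an $n/(k+2)$-fraction), with the same arithmetic. Your first case is sound, and your prefix-traceability sublemma is correct (the last vertex of the left ordering is simplicial, so its two neighbours on the Hamiltonian path are adjacent and the path can be shortcut); this is a legitimate, arguably cleaner, alternative to the paper, which instead extracts a single large fragment of the Hamiltonian path inside the left part (there are at most $k+1$ fragments) and glues $v_i$ plus one connector vertex onto it to restore Hamiltonicity.

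The gap is in your second case. To recurse with clique number $k-1$ you must exclude $u_p$, so the graph you hand to the induction is essentially $G[B_p\cup\{v_n\}]$ (keeping $u_p$ leaves the clique number at $k$, since $u_p$ is universal over $B_p\cup\{v_n\}$). Neither $G[B_p\cup\{v_n\}]$ nor $G[B_p\cup\{u_p,v_n\}]$ need have a Hamiltonian path: take $u_p=[0,10]$, $v_n=[9,12]$, $B_p=\{[1,2],[3,4],[5,6]\}$ and two further intervals $[-1,3.5]$, $[-2,5.5]$; the whole graph is traceable, but $G[B_p\cup\{u_p,v_n\}]$ is a star and $G[B_p\cup\{v_n\}]$ is edgeless. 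Suffixes of the left ordering do not inherit traceability the way prefixes do, so your sublemma from case one does not apply here. The missing idea is the separator clique $K=\{w: l(w)\le l(u_p)\le r(w)\}$ (which is in general not $N(v_n)$): every excursion of the Hamiltonian path out of $B_p\cup\{v_n\}$ enters and leaves through $K$, so the fragments can be reconnected inside $G[K\cup B_p\cup\{v_n\}]$; and since every vertex of that graph other than $u_p$ either contains the point $l(u_p)$ or has its left endpoint inside $I(u_p)$, the vertex $u_p$ is universal there, so deleting it alone drops the clique number to $k-1$ while $v_n$ remains last in the left ordering. Without identifying $K$ and performing this reconnection, the recursion in your second case cannot be launched.
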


\begin{proof}
We prove the lemma by induction on $n$ and $k$.  If $k=2$, then the
hypothesis implies that $G$ is a path on $n$ vertices, and the desired
result holds with $H=G$, and the inequality $n\ge f_1(n,2)=
\log_{4}(\tfrac{n}{6})$ holds for all $n\ge 2$.  If $n\le 3$, then
either $k=2$, or $k=3$ and since $f_1(3,3)<1$ the result holds
trivially.  Now assume that $k\ge 3$ and $n\ge 4$.  Let $v_1<\cdots<
v_n$ be a left ordering of $V(G)$.  Let $\mathcal{P}$ be a Hamiltonian
path in $G$.  Let $i$ be the largest integer such that $v_i$ is a
neighbor of $v_n$, and define the sets
\begin{quote}
$L=\{v\in V(G)\mid$ $r(v)<l(v_i)\}$, \\
$R=\{v\in V(G)\mid$ $l(v_i)<l(v)\}$, and \\
$K=\{v\in V(G)\mid$ $l(v)\le l(v_i)\le r(v)\}$.
\end{quote}
So $L$, $R$ and $K$ form a partition of $V(G)$.  Clearly $K$ is a
clique, so the subgraph $G_{RK}$ of $G$ induced by $R \cup K$ also has
a Hamiltonian path.  Every vertex $v$ in $R$ satisfies $l(v_i)<l(v)\le
l(v_n)\le r(v_i)$, so every vertex of $G_{RK}\setminus\{v_i\}$ is
adjacent to $v_i$.  It follows that $G_{RK}\setminus\{v_i\}$ has
maximum clique size at most $k-1$.  Observe that $v_n$ is the last
vertex of $G_{RK}\setminus\{v_i\}$ in its induced left ordering.
Therefore, if $G_{RK}\setminus\{v_i\}$ contains at least
$\tfrac{n}{k+2}$ vertices, then by the induction hypothesis
$G_{RK}\setminus\{v_i\}$ (and then $G$) contains an induced subgraph
that satisfies the desired property and has size at least
$f_1(\frac{n}{k+2},k-1)= \log_{k+1}(\frac{n}{(k+2) (k+1)!})=
\log_{k+1}(\tfrac{n}{(k+2)!})\geq f_1(n,k)$.

Assume now that $G_{RK}\setminus\{v_i\}$ contains strictly less that
$\tfrac{n}{k+2}$ vertices.  Then $L$ contains at least
$\tfrac{k+1}{k+2}\,n$ vertices, and the restriction of $\mathcal{P}$
to $L$ consists of at most $k+1$ subpaths (because $|K|\le k$), so one
of these subpaths has size at least $\tfrac{n}{k+2}$.  Let $G_L$ be
the graph induced by the vertices of this subpath, together with $v_i$
and a vertex of $K$ adjacent to an endpoint of the subpath.  Note that
$G_L$ is Hamiltonian, and $v_i$ is by definition the last vertex of
$G_L$ in its induced left ordering.  By the induction hypothesis $G_L$
(and then $G$) contains an induced subgraph $H$ that satisfies the
desired property and has size at least size $f_1(\frac{n}{k+2},k)$.
In particular, the last vertex in the induced left ordering of $H$ is
$v_i$.  Appending $v_n$ to $H$ yields an induced subgraph of $G$ that
satisfies the desired property and has size at least
$f_1(\frac{n}{k+2},k)+1= \log_{k+2}(\frac{n}{(k+2) (k+2)!})+1=
f_1(n,k)$.
\end{proof}

\begin{lemma}
Let $G$ be an interval graph with $n$ vertices, and let $k\ge 2$ be
the maximum clique size in $G$.  Suppose that in the left ordering
each vertex is adjacent to its successor.  Then $G$ contains an
induced subgraph $H$ of size $f_2(n,k)=n^{\frac{1}{k-1}}$ where in the
left ordering each vertex is adjacent to its successor and where there
is no simplicial vertices, except the last and the first.
\end{lemma}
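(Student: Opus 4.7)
The plan is to construct $H$ as a greedy ``maximum-jump'' chain along the left ordering of $G$, and to bound its size by induction on $k$.

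For each index $i$, let $\tau(i)$ denote the largest $j$ with $v_j\sim v_i$; the consecutive-adjacency hypothesis gives $\tau(i)\ge i+1$ whenever $i<n$. I will define a chain $w_1,\ldots,w_L$ by $w_1=v_1$ and $w_{t+1}=v_{\tau(i_t)}$, where $i_t$ is the index of $w_t$ in the left ordering; this terminates when $w_L=v_n$. The central structural observation is that this chain is automatically \emph{zigzag}: for every $2\le t\le L-1$ one has $w_{t-1}\not\sim w_{t+1}$. Indeed, the definition $\tau(i_{t-1})=i_t$ forces $l(v_{i_t+1})>r(w_{t-1})$, while $\tau(i_t)\ge i_t+1$ yields $l(w_{t+1})\ge l(v_{i_t+1})>r(w_{t-1})$. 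Thus, inside the induced subgraph on $\{w_1,\dots,w_L\}$ consecutive-adjacency holds by construction of $\tau$, and no interior vertex is simplicial because its two chain-neighbors are already non-adjacent.

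I would then proceed by induction on $k$. In the base case $k=2$, $G$ is a path (otherwise three consecutive left-ordered vertices would give a triangle), so $H=G$ works with $|H|=n=n^{1/(k-1)}$. For the inductive step, set $S_t=\{v_j:i_t<j\le i_{t+1}\}$, so that $\sum_{t=1}^{L-1}|S_t|=n-1$. Every $v_j\in S_t$ satisfies $l(v_j)\le l(w_{t+1})\le r(w_t)$, so it is adjacent to $w_t$; therefore any clique of $G[S_t]$ together with $w_t$ is a clique of $G$, and $G[S_t]$ has maximum clique size at most $k-1$. It also inherits consecutive-adjacency, since $S_t$ is a contiguous block of the left ordering of $G$. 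If some $|S_t|\ge n^{(k-2)/(k-1)}$, I apply the induction hypothesis inside $G[S_t]$ and obtain an induced subgraph of size at least $|S_t|^{1/(k-2)}\ge n^{1/(k-1)}$ with the required properties, which remains valid when viewed in $G$. Otherwise every gap satisfies $|S_t|<n^{(k-2)/(k-1)}$, and from $\sum|S_t|=n-1$ one obtains $L-1\ge (n-1)/n^{(k-2)/(k-1)}\ge n^{1/(k-1)}-1$, so the greedy chain itself has $L\ge n^{1/(k-1)}$ vertices and serves as $H$.

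The main technical point is verifying the zigzag property of the greedy chain; this is what allows the chain itself (rather than some more elaborate sub-selection) to be the induced subgraph $H$, and in particular removes the need to concatenate sub-chains coming from different gaps. Once this is in place, the remaining work is inductive bookkeeping: the clique-size reduction inside each gap $G[S_t]$, together with the dichotomy ``large gap $\Rightarrow$ recurse'' versus ``all gaps small $\Rightarrow$ chain long'', interlocks exactly at the exponent $1/(k-1)$.
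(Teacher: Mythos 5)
Your proof is correct, and it follows a genuinely different route from ours. We obtain $H$ by iteratively deleting simplicial vertices (other than the first and last); the technical heart is an invariant showing that the leftmost neighbour $w$ of any non-extremal vertex always retains a non-edge $ab$ with $l(a)<l(w)<l(b)$ in its neighbourhood and hence is never deleted, after which the survivors are counted by the same dichotomy you use (some fiber $S_w$ of the leftmost-neighbour map is large, so recurse with clique number $k-1$; otherwise many distinct leftmost neighbours survive). You replace the whole deletion process by the greedy furthest-neighbour chain: since $i_s\ge i_{t+1}>\tau(i_{t-1})$ for every $s\ge t+1$, the chain is in fact a fully \emph{induced path}, not merely zigzag; the gaps $S_t$ are contiguous in the left ordering (so consecutive adjacency is inherited) and every vertex of $S_t$ has its left endpoint in $I(w_t)$ (so the clique number drops by one); and the telescoping $\sum_t|S_t|=n-1$ drives the same ``large gap versus long chain'' dichotomy at the exponent $1/(k-1)$. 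All of these steps check out, including the edge cases ($k=2$, $|S_t|=1$, $L\le 2$), at the level of rigour of the rest of the section. What your route buys is substantial: unwinding your recursion, the subgraph $H$ you produce is always an induced path of size $n^{1/(k-1)}$, which is strictly stronger than the conclusion of the lemma. Composed with Lemma~\ref{lem:int1}, this makes the third lemma of this section superfluous and improves the bound in Theorem~\ref{thm:int} from $c_k(\log n)^{1/(k-1)^2}$ to $c_k(\log n)^{1/(k-1)}$; since the nested constructions at the end of Section~3 show that $n^{1/(k-1)}$ is essentially the right exponent for Hamiltonian interval graphs of clique number $k$, your argument is tight at this stage and the remaining loss is confined to Lemma~\ref{lem:int1}. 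We encourage you to write this up carefully.
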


\begin{proof}
First, note that when we remove a simplicial vertex, we still have a
graph where in left ordering, each vertex is adjacent to its
successor.

At each step, we will remove a simplicial vertex which is not the
first or the last, until there is no simplicial vertices other than
the last and the first.

Let $v$ be a vertex which is not the first or the last.  Denote by $w$
the first neighbor of $v$ in left ordering.  We claim that:
\begin{equation}\label{wnr}
\mbox{$w$ is never removed.}
\end{equation}
Indeed if $w$ is the first vertex in the ordering, it cannot be
removed; so let us assume that $w$ is not the first vertex, and let
$w'$ be its predecessor.  So $w'$ is adjacent to $w$, and $w'$ is not
adjacent to $v$ by the definition of $w$.  We claim that at each step
there is a non-edge $ab$ such that $a$ and $b$ are neighbors of $w$
and $l(a) < l(w)< l(v)\le l(b)$.  This is true at the first step with
$a=w'$ and $b=v$.  Assume that at step $i$, there is such a non-edge
$ab$.  We remove a simplicial vertex $s$.  Clearly $s\neq w$.  If
$s\notin\{a,b\}$, then $ab$ remains a non-edge in the neighborhood of
$w$.  Suppose that $s=b$.  Let $b'$ be the successor of $b$ (note that
$b'$ exists since $s$ is not the last vertex).  Then $b'$ is adjacent
to $w$ since $b$ is simplicial, and $l(v) \leq l(b)<l(b')$.  Also $b'$
is not adjacent to $a$, for that would force $b$ to be adjacent to $a$
(because $l(a)< l(b)< l(b')$).  Hence $ab'$ is a non-edge with the
desired property.  Finally suppose that $s=a$.  Let $a'$ be its
predecessor (note that $a'$ exists since $s$ is not the first vertex).
Then $a'$ is adjacent to $w$ since $a$ is simplicial.  Also $a'$ is
not adjacent to $b$, for otherwise $a'$ is adjacent to $v$ (because
$l(a')<l(w)<l(v)\le l(b)$), and so $a'$ contradicts the choice of $w$.
Hence $a'b$ is a non-edge with the desired property.  Thus (\ref{wnr})
holds.

\medskip

Now, we can prove the lemma by induction on $k$.  For $k=2$, the graph
$G$ is a path and we can take $H=G$.  Assume that $k\ge 3$.  For a
vertex $w$, we denote by $S_w$ the set of vertices having $w$ as their
first neighbor.  Suppose that for some vertex $w$, the set $S_w$ has
size at least $n^\frac{k-2}{k-1}$.  Note that all the vertices of
$S_w$ are consecutive, and the subgraph $G[S_w]$ of $G$ induced by
$S_w$ has clique size at most $k-1$; hence, by the induction
hypothesis, $G[S_w]$ has an induced subgraph of size
$n^{\frac{1}{k-1}}$ with the desired property.  Assume now that for
every $w$, $S_w$ has size at most $n^\frac{k-2}{k-1}$.  It follows
from (\ref{wnr}) that for every removed vertex $v$, the first neighbor
of $v$ is preserved.  Each first neighbor is counted at most
$n^\frac{k-2}{k-1}$ times, so at least
$n/n^\frac{k-2}{k-1}=n^{\frac{1}{k-1}}$ vertices are preserved, as
desired.
\end{proof}

\begin{lemma}
Let $G$ be an interval graph with $n$ vertices and with maximal clique
of size $k\ge 2$ where, in left ordering, each vertex is adjacent to
its successor and where there are no simplicial vertices, except the
last and the first.  Then $G$ contains an induced path of size
$f_3(n,k)=(\frac{n}{k})^{\frac{1}{k-1}}$.
\end{lemma}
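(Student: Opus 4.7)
The plan is to proceed by induction on $k$. For the base case $k=2$, I would show that the hypotheses force $G$ itself to be the path $v_1$-$v_2$-$\cdots$-$v_n$: if some $v_i$ had a right neighbor $v_j$ with $j \geq i+2$, then $l(v_j) \leq r(v_i)$, while the forbidden triangle $\{v_i,v_{i+1},v_{i+2}\}$ (max clique is $2$) forces $r(v_i) < l(v_{i+2}) \leq l(v_j)$, a contradiction. Hence $G$ is itself an induced path of size $n \geq n/2 = f_3(n,2)$.

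For the inductive step $k \geq 3$, I would build a greedy induced path $P = w_0, w_1, \ldots, w_L$ by setting $w_0 := v_1$ and $w_{i+1} := R(w_i)$, the neighbor of $w_i$ with the largest index in the left ordering. This $P$ is induced because $w_{i+1}$ is a neighbor of $w_i$ by construction, and for every $j \geq i+2$ the index of $w_j$ strictly exceeds $R(w_i)$: since each vertex is adjacent to its successor, $R(w_{i+1}) \geq w_{i+1}+1 > R(w_i)$ and the indices only grow thereafter, so $w_j \notin N(w_i)$.

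If $|P| \geq (n/k)^{1/(k-1)}$, we are done. Otherwise, the $n - L - 1$ off-path vertices split into $L$ consecutive gaps $\{v_j : w_i < j < w_{i+1}\}$, so by averaging some gap $G'$ contains at least $(n-L-1)/L$ vertices. Every vertex of $G'$ is a right-neighbor of $w_i$, so adjoining $w_i$ to any clique of $G'$ produces a clique of $G$; thus the maximum clique size of $G'$ is at most $k-1$. Since ``each vertex adjacent to its successor'' is inherited, the inductive hypothesis applied to $G'$ yields an induced path of size at least $(|G'|/(k-1))^{1/(k-2)}$, and a direct calculation shows this beats the target: plugging in $L+1 = (n/k)^{1/(k-1)}$ collapses the ratio of the two bounds to $(k/(k-1))^{1/(k-2)} > 1$.

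The main obstacle is that the gap subgraph $G'$ does not automatically inherit the ``no interior simplicial vertex'' hypothesis, because an interior vertex of $G'$ may have had its only non-adjacent pair of $G$-neighbors lying outside the gap. The cleanest way around this is to observe that none of the steps above actually use this hypothesis, so one may prove, by the same induction, the slightly stronger statement in which the simpliciality assumption is dropped, and Lemma~3 follows as a special case; alternatively one can reintroduce the condition locally by applying the preceding lemma (Lemma~2) to $G'$ before recursing, at the cost of a slightly worse constant.
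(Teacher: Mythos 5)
Your proposal is correct as far as I can check, and it takes a genuinely different route from the paper -- in fact a stronger one. The paper's proof extracts a maximal stable set $S=\{s_1,\dots,s_q\}$ whose intervals are inclusion-minimal and lexicographically leftmost, uses the \emph{non-simpliciality} of the interior vertices to produce, for each $i$, a connector $t_{i-1}$ whose interval covers both $r(s_{i-1})$ and $l(s_i)$, and then recurses inside the neighborhood of a connector meeting many vertices of $S\cup\{t_1,\dots,t_{q-1}\}$; the hypothesis that only the first and last vertices are simplicial is used precisely (and only) to make the connectors exist. You instead take the greedy rightmost-neighbor path and recurse on a large gap, and the two facts that carry your argument are both genuine properties of the left ordering: the indices $R(w_i)$ strictly increase (since each vertex is adjacent to its successor, $R(w_{i+1})\ge w_{i+1}+1>R(w_i)$), so no $w_j$ with $j\ge i+2$ can be a neighbor of $w_i$ and the greedy path is induced and terminates at $v_n$; and every $v_j$ with $w_i<j<R(w_i)$ satisfies $l(v_{w_i})<l(v_j)<l(v_{R(w_i)})\le r(v_{w_i})$, so the entire gap lies in $N(v_{w_i})$ and its clique number drops to at most $k-1$. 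Since each gap is a set of consecutive vertices of the left ordering, successor-adjacency is inherited, and the arithmetic closes: with $T=(n/k)^{1/(k-1)}$, the requirement $(n-T)/(T(k-1))\ge T^{k-2}$ reduces to $T\le n/k$, which holds whenever $n\ge k$ (and for $n<k$ a single vertex suffices). Your first resolution of the ``main obstacle'' is the right one: the non-simpliciality hypothesis is never invoked in your argument, so you are really proving the lemma with that hypothesis deleted. This buys something substantial: it makes the paper's second lemma (the simplicial-vertex-removal step) unnecessary and would improve the exponent in the main interval-graph theorem from $1/(k-1)^2$ to $1/(k-1)$. Given that consequence, write the induction out carefully -- in particular state the lemma with ``clique number at most $k$'' so that it applies verbatim to a gap whose clique number may be smaller than $k-1$, and handle the degenerate cases $L=0$ and $n<k$ explicitly -- but I was unable to find a flaw in the argument.
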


\begin{proof}
Note that the hypothesis implies that $G$ is connected.  Let $S=\{s_1,
\dots, s_q\}$ be a maximal stable set, with $s_1< \cdots< s_q$, and
such that each interval $I(s_i)$ ($i\in\{1,\dots,q\}$) is minimal by
inclusion and (with respect to these two conditions) the numerical
vector $l_S= (l(s_1),\ldots,l(s_q))$ is minimal in lexicographic
order.  We claim that for each $i=2, \dots, q-1$, there is a vertex
$t_{i-1}$ with $r(s_{i-1}),l(s_i) \in I(t_{i-1})$.  This follows from
the fact that $s_i$ is not simplicial and therefore has a non-edge
$ab$, say with $a<b$, in its neighborhood.  By inclusion-wise
minimality of $I(s_i)$, $I(a)$ intersects $l(s_i)$, and if $a$ is not
adjacent to $s_{i-1}$ then $I(a)$ (or an interval contained in $I(a)$
which is minimal with this property) contradicts the lexicographic
minimality of $S$.  Moreover we claim that there is a vertex $t_{q-1}$
with $r(s_{q-1}),l(s_q) \in I(t_{q-1})$.  Indeed, since $G$ is
connected, there is a chordless path $u_0$-$\cdots$-$u_p$ with
$u_0=s_{q-1}$ and $u_p=s_q$.  Let $j$ be the smallest integer in
$\{0,\ldots,p\}$ such that $l(u_j)>r(u_0)$; note that $j$ exists since
$l(u_p)>r(u_0)$.  Moreover, $l(u_j)\le l(u_p)$ since the path is
chordless.  In fact $j=p$, for otherwise we have $l(u_j)< l(u_p)$ and
the set $\{s_1, \ldots, s_{q-1}, u_j\}$ would contradict the choice of
$S$.  Now the vertex $u_{p-1}$ can play the role of $t_{q-1}$ and
satisfies the claim.

Let $U=S \cup \{t_1, \dots, t_{q-1}\}$.  We prove by induction on $k$
that if $U$ has size $N$, then there is an induced path of size
$N^{\frac{1}{k-1}}$ in the subgraph of $G$ induced by $U$.  If $k=2$
then the subgraph induced by $U$ is a path.  Now assume that $k\ge 3$.
We consider the number of vertices of $U$ intersected by $t_1, \dots,
t_{q-1}$.  Suppose that one of these vertices intersects more than
$N^{\frac{k-2}{k-1}}$ vertices.  In the graph induced by these
vertices (and the corresponding vertices of $S$), the maximal clique
has size at most $k-1$, so by the induction hypothesis it contains an
induced path of size $N^{\frac{1}{k-1}}$.  Now suppose that each of
$t_1, \dots, t_{q-1}$ intersects at most $N^{\frac{k-2}{k-1}}$
vertices of $U$.  Then we can build a path, starting with $s_q$, such
that after each vertex $s_i$ with $i\neq 1$ we take the vertex
$t_{i-1}$, and after each vertex $t_{i}$, we take the smallest vertex
of $S$ adjacent to $t_i$.  Thus we obtain an induced path with at
least $N^{\frac{1}{k-1}}$ vertices.

Since an interval graph with $n$ vertices and of maximum clique size
$k$ is properly $k$-colorable, it contains a stable set of size at
least $\tfrac{n}{k}$.  It follows that $N \geq q \geq \frac{n}{k}$,
and therefore $G$ contains an induced path of size
$(\frac{n}{k})^{\frac{1}{k-1}}$, as desired.
\end{proof}

It follows from the preceding three lemmas that any interval graph of
maximum clique size $k$ containing a path on $n$ vertices also
contains an induced path of size %
$$\left( \frac{
(\log_{k+2}(n)-\log_{k+2}((k+2)!))^{\tfrac1{k-1}}}{k}\right)^{\tfrac1{k-1}}\le
c_k (\log n)^{\tfrac1{(k-1)^2}},$$ for some contant $c_k$.  This
proves Theorem~\ref{thm:int}.

This result shows that interval graphs satisfy
Conjecture~\ref{conj:1}, but unfortunately we do not have a
construction showing that our lower bound has the correct order of
magnitude in the specific case of interval graphs.  It might still be
the case that interval graphs with long paths and bounded clique
number have induced paths of polynomial size.  Improving
Lemma~\ref{lem:int1} might be the key in proving such a result (since
the other two lemmas gives polynomial bounds).

\section{Conclusion}

We proved that $k$-trees with long paths have induced paths of
logarithmic size.  However, this does not give any clue whether
\emph{partial} $k$-trees with paths of size $n$ have induced paths of
size polylogarithmic in $n$.  We only proved that one cannot hope to
obtained a bound exceeding $\Omega((\log n)^{\frac{1}{2k}}))$.

We believe that proving Conjecture~\ref{conj:1} for partial $k$-trees
will also imply (with a reasonable amount of work, based on our result
for graphs embedded on fixed surfaces) that Conjecture~\ref{conj:1}
holds for any proper minor-closed class, and any proper class closed
under topological minor (using the corresponding structure theorems).

\end{document}